\newtheorem{thm}[equation]{Theorem}
\newtheorem{prop}[equation]{Proposition} 
\newtheorem{remark}[equation]{Remark}
\numberwithin{equation}{section}
\newcommand{\angstrom}{\mbox{\normalfont\AA}} 
\renewcommand{\det}{\mathsf{det}}
\renewcommand{\max}{\mathsf{max}}
\newcommand{\overbar}[1]{\mkern 1.2mu\overline{\mkern-1mu#1\mkern-1mu}\mkern 1mu}
\newcommand{\CC}{\mathbb{C}}   
\newcommand{\Cs}{\mathbf{C}} 
\newcommand{\DD}{\mathbf{D}}
\newcommand{\dr}{\mathrm{d}} 
\newcommand{\arm}{\mathrm{a}}
\newcommand{\vs}{\mathsf{v}}
\newcommand{\GG}{\mathsf{G}} 
\newcommand{\II}{\mathbf{I}} 
\newcommand{\lam}{\lambda}  
\newcommand{\mm}{\mathsf{m}} 
\newcommand{\Ms}{\mathsf{M}} 
\newcommand{\OO}{\mathbf{O}}
\newcommand{\SU}{\mathsf{SU}} 
\newcommand{\pr}{\mathrm{p}}
\newcommand{\Sr}{\mathbf{S}} 
\newcommand{\Ss}{\mathsf{S}} 
\newcommand{\Tr}{\mathrm{T}}   
\newcommand{\Ts}{\mathsf{T}} 
\newcommand{\TT}{\mathbf{T}}
\newcommand{\Ur}{\mathrm{U}}
\newcommand{\Vs}{\mathsf{V}}  
\newcommand{\what}{\widehat} 
\newcommand{\Zs}{\mathsf{Z}}
\newcommand\End{\mathsf {End}}
\newcommand\half{\frac{1}{2}}
\def \ot {\otimes} 
\def\modd{\, \mathsf{mod} \,}
\def\dimm{\, \mathsf{dim}\,}
\begin{document} 
\title{Poincar\'e Series for Tensor Invariants and the McKay Correspondence}
\author{Georgia Benkart} 
 \date{}
\maketitle

\begin{abstract} For a finite group $\GG$ and a finite-dimensional $\GG$-module $\Vs$,
we prove a general result on the Poincar\'e series for the $\GG$-invariants in the tensor
algebra $\Ts(\Vs) = \bigoplus_{k \geq 0} \Vs^{\ot k}$.  We apply this result to the finite
subgroups $\GG$ of the $2 \times 2$ special unitary  matrices  and their natural 
module $\Vs$ of $2 \times 1$ column vectors.  Because these subgroups are in one-to-one correspondence
with the simply laced affine Dynkin diagrams by the McKay correspondence, the Poincar\'e series obtained are the generating functions
for  the number of walks on the simply laced affine Dynkin diagrams.   
\end{abstract}
\textbf{MSC Numbers (2010)}: 14E16, 05E10, 20C05   \hfill \newline
\textbf{Keywords}:   
tensor invariants, Poincar\'e series, McKay correspondence, Schur-Weyl duality 

\maketitle

\maketitle 
\begin{section} {Introduction} \end{section} 
Let $\GG$ be a group,  and assume $\{\GG^\lambda \mid \lambda \in \Lambda(\GG)\}$ is
the set of finite-dimensional irreducible $\GG$-modules over the complex field $\mathbb C$.   
Associated to a fixed finite-dimensional $\GG$-module  $\Vs$  over $\mathbb C$  
is the {\it representation graph} $\mathcal R_{\Vs}(\GG)$ having 
nodes  indexed by the  $\lambda$ in $ \Lambda(\GG)$  and  
  $a_{\mu,\lambda}$ edges from $\mu$ to $\lambda$ in
$\mathcal R_{\Vs}(\GG)$ if
\begin{equation}\label{eq:Got} \GG^\mu \ot \Vs  = \bigoplus_{\lambda \in \Lambda(\GG)}  a_{\mu, \lambda} \GG^\lambda.\end{equation} 
Thus, the number of edges $a_{\mu,\lambda}$ from $\mu$ to $\lambda$ in $\mathcal{R}_\Vs(\GG)$  is the multiplicity of
$\GG^\lambda$ as a summand of $\GG^\mu \ot \Vs$.    

When $\GG$ is a finite group, the representation graph and the characters of $\GG$ are closely related.   Assume  $\chi_\Vs$  is  the character of   $\Vs$,   and $\chi_\lambda$ is the character of  $\GG^\lambda$ for $\lambda \in \Lambda(\GG)$.  Let  $\mathsf{d} = \dimm \Vs =\chi_\Vs(1)$.   Steinberg \cite{Ste} has shown that when
the action of $\GG$ on $\Vs$ is faithful, the following hold:
\medskip

\begin{itemize}
\item The eigenvalues  of the matrix $ \big( \mathsf{d}\,\delta_{\mu,\lambda} - a_{\mu, \lambda}\big)$ are $\mathsf{d}-\chi_\Vs(g)$ as $g$ ranges  over a set $\Gamma$ of conjugacy class representatives of $\GG$.
\item The column vector
  $\big(\chi_\lambda(g)\big)$ with entries given by the
  character values of the irreducible $\GG$-modules at $g$ is an eigenvector corresponding to  $\mathsf{d}-\chi_\Vs(g)$.  These vectors form the columns of the character table of $\GG$.   
\item The vector $\big(\mathsf{d}^\lambda \big)$,  whose entries are the  dimensions $\mathsf{d}^\lambda = \dimm \GG^\lambda =\chi_\lambda(1)$ of the irreducible $\GG$-modules,   corresponds to the eigenvalue 0. 
\end{itemize} 

Let  $\GG^{0}$ be  the one-dimensional trivial $\GG$-module  on which every element of the group  acts as the identity transformation, and let   $\mm_{k}^\lam$  be the number of walks of $k$ steps
from $0$ to $\lam$ on the representation graph $\mathcal{R}_\Vs(\GG)$.    Since each step on the graph is accomplished by  tensoring with $\Vs$,   
$\mm_{k}^\lambda$ is the multiplicity of the irreducible $\GG$-module  $\GG^\lambda$ in
$\GG^0 \otimes \Vs^{\ot k} \cong \Vs^{\ot k}$. 

In what follows, we identify   $\Vs^{\ot 0} \cong \CC$ as a $\GG$-module  with  $\GG^{0}$,
so that $\mm_0^{\lam} = \delta_{\lam,0}$  (the Kronecker delta).      
For $\lam \in \Lambda(\GG)$,  we  consider the Poincar\'e series 
\begin{equation}\label{eq:ps} \mm^\lam(t) = \sum_{k \geq 0}  \mm_{k}^\lam \, t^k \end{equation}
for the multiplicity of $\GG^\lam$ in  the tensor algebra $\Ts(\Vs) =  \bigoplus_{k \geq 0} \Vs^{\ot k}$,
(which is also the generating function  for the number of walks 
from 0 to $\lam$ in $\mathcal R_{\Vs}(\GG)$).   In particular,  $\mm^{0}(t)$  is the Poincar\'e
series for the $\GG$-invariants,   $\Ts(\Vs)^\GG =\{ w \in \Ts(\Vs)  \mid g w = w \ \hbox{for all} \ g \in \GG\}$,  in  $\Ts(\Vs)$.    

The \emph{centralizer algebra}, 
$$\Zs_k(\GG) = \{ X \in \End(\Vs^{\ot k}) \mid  X g w = g Xw \ \ \hbox{\rm for all} \ \
w \in \Vs^{\ot k}\},$$  plays an essential role in understanding the $\GG$-module
$\Vs^{\ot k}$.  The idempotents that project $\Vs^{\ot k}$ onto its irreducible
$\GG$-summands live in  the finite-dimensional semisimple associative algebra $\Zs_k(\GG)$.   
\emph{Schur-Weyl duality}  relates the decomposition of \ $\Vs^{\ot k}$ as a $\GG$-module to the decomposition of \ $\Vs^{\ot k}$  as a $\Zs_k(\GG)$-module revealing the following connections  between the representation theories of $\GG$ and
$\Zs_k(\GG)$: 
\begin{itemize}
\item \  The irreducible $\Zs_k(\GG)$-modules $\Zs_k^\lam$  are in bijection with
the elements $\lam$ of  $\Lambda_k(\GG) := \{\mu \in \Lambda(\GG) \mid 
 \mm_k^\mu \geq 1\}$. 
\item  \  \  $\Vs^{\ot k}  \,\cong \,  \displaystyle{\bigoplus_{\lambda \in \Lambda_k(\GG)}  \mm_k^\lambda \, \,\GG^\lambda  \quad \hbox{\rm and}   \quad  
\Vs^{\ot k}  \, \cong \,  \bigoplus_{\lambda \in \Lambda_k(\GG)} \mathsf {d}^\lambda \,\, \Zs_k^\lambda}$.
\item \   $\dimm \Zs_k^\lambda =  \mm_k^\lambda =$ \  number of walks of \ $k$ \ steps from \   0 \ to  $ \lambda$
on ${\mathcal R}_\Vs(\GG)$.
\item \  \   $\dimm \GG^\lambda =  \mathsf{d}^\lambda$.
 \item  
 \  \  $\dimm\, \Zs_k(\GG) = \displaystyle{\sum_{\lambda \in \Lambda_k(\GG)} (\dimm \, \Zs_k^\lambda)^2 =  \sum_{\lambda
  \in \Lambda_k(\GG)} (\mm_k^\lambda)^2}$.
  
 \item  \ \  When the dual module $\Vs^*$ is isomorphic to $\Vs$ as a $\GG$-module, then
 
\  $\dimm\, \Zs_k(\GG) =$ \ number  of walks of \ $2k$ \ steps from \  $0$ \  to \ $0$ \  on \  ${\mathcal R}_\Vs(\GG)$ 
  
 \qquad \qquad \quad  \   $= \dimm \Zs_{2k}^0$.

   \item \  \  As a $(\GG, \Zs_k(\GG))$-bimodule,  $\Vs^{\ot k}$ has a multiplicity-free decomposition, 
\begin{center} {$\Vs^{\ot k}  \cong \displaystyle{\bigoplus_{\lambda \in \Lambda_k(\GG)}} \left(\GG^\lambda \otimes  \Zs_k^\lambda \right)$.} \end{center} 
\item   \  \ $\mm^\lam(t) = \sum_{k \geq 0} \mm_k^\lam \, t^k   =  \sum_{k \geq 0} \left( \dimm \Zs_k^\lam \right)\, t^k.$
  \end{itemize}  
  
Often one is able to build 
 the entire family  of finite-dimensional irreducible $\GG$-modules from a single well-chosen module $\Vs$ and its tensor powers by applying idempotents in the algebras $\Zs_k(\GG)$  to project onto the irreducible $\GG$-summands.
Schur's groundbreaking 1901 doctoral thesis constructed the finite-dimensional irreducible polynomial representations for the general linear group
  $\mathsf{GL}_n(\CC)$ from tensor powers of its defining module $\Vs = \mathbb C^n$ in exactly this way.  
The algebra $\Zs_k(\mathsf{GL}_n(\CC))$ is a homomorphic image of the group algebra $\CC \Sr_k$ of the
  symmetric group $\Sr_k$ for $k \geq 1$, which acts by permuting the factors of $\Vs^{\ot k}$.  
    
Our aim here is to establish a general result about
the Poincar\'e series  $\mm^\lam(t)$ for arbitrary finite groups  and then to apply this result to the finite subgroups
$\GG$ of the special unitary group 
$${\mathsf{SU}_2} = \displaystyle{ \left\{\begin{pmatrix} \ x & y \\ - \bar y  & \bar x \end{pmatrix} \, \bigg\vert\  x, y \in \CC, \ \,
x \bar x + y \bar y = 1 \right\}},$$ where  ``$\overbar{\ \ }$'' denotes complex conjugate.
These subgroups are especially interesting  because of the McKay correspondence and the
vast  literature it has inspired during the last 35 years. 

In 1980, J. McKay \cite{Mc1, Mc2}  discovered  a remarkable one-to-one correspondence between the isomorphism classes of the finite subgroups of  $\SU_2$  and the  simply laced affine Dynkin diagrams.   Almost a century earlier,  F. Klein had determined that a finite subgroup of $\SU_2$ must be isomorphic to one of the following:  (a) a cyclic group $\Cs_n$ of order $n$, (b) a binary dihedral group $\DD_n$ of order $4n$, or (c) one of the 3 exceptional groups: the binary tetrahedral group $\TT$ of order 24,  the binary octahedral group $\OO$ of order 48, or the binary icosahedral group $\II$ of order 120.   Binary here refers to the fact  that the  center is  $\{\pm \mathrm{I} \}$, where $ \mathrm{I}$ is
the $2 \times 2$ identity matrix, and the group modulo its center  is a dihedral group or the rotational symmetry group of a tetrahedron, octahedron, or icosahedron in the exceptional cases.   

The natural module for ${\mathsf{SU}_2}$ is the space 
$\Vs  =  \CC^2 =  \left \{\big(\begin{smallmatrix} \cdot \\ \cdot \end{smallmatrix} \big ) \right\}$ 
of $2 \times 1$ column vectors,  which ${\mathsf{SU}_2}$ and its finite
subgroups $\GG$  act on  by matrix multiplication.  McKay's observation was that the representation graph
$\mathcal R_{\Vs}(\GG)$ for  $\GG = \Cs_n, \DD_n, \TT, \OO, \II$ is exactly the affine Dynkin diagram $\mathsf{\what{A}_{n-1}}$, $\mathsf{\what{D}_{n+2}}$, $\mathsf{\what{E}_6}$, $\mathsf{\what{E}_7}$, $\mathsf{\what{E}_8}$, respectively,  with the
vertex 0 being the affine node.   Thus, the correspondence gives the pairings below.  The label inside the node is the index of the irreducible $\GG$-module, and the label above the node is its dimension, which is the mark on the Dynkin diagram.  The trivial module is indicated in white and the module $\Vs = \CC^2$ in black.  In the cyclic case $\Vs = \Cs_n^{(n-1)} \oplus \Cs_n^{(1)}$, 
and in all other cases $\Vs = \GG^{(1)}$.
\begin{equation}\label{eq:dd} 
\begin{array}{c} \hspace{-.7cm}  (\Cs_n, \what{\mathsf A}_{n-1}) \hspace{4.8cm}  (\DD_n, \what{\mathsf D}_{n+2}) \\\includegraphics[scale=.65,page=3]{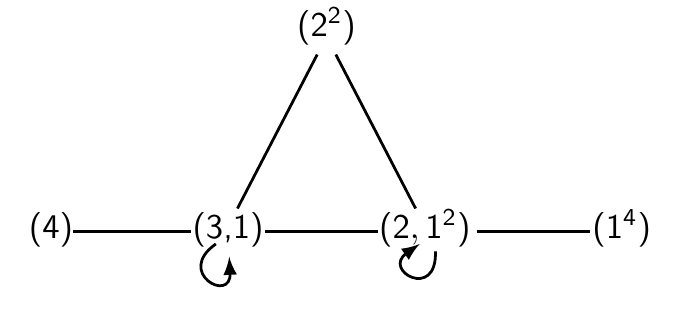} 
\end{array}  
\end{equation} 
$$
\begin{array}{c}\hspace{-1cm}  (\TT, \what{\mathsf E}_{6}) \hspace{5cm}  (\OO, \what{\mathsf E}_{7})\\ \includegraphics[scale=.6,page=4]{poincare-diagrams.pdf} \\
(\II, \what{\mathsf E}_{8})\\
\includegraphics[scale=.6,page=5]{poincare-diagrams.pdf} 
\end{array}
$$ 

Furthermore,  the following hold:
\begin{itemize}  
\item[{\rm (i)}] The sum   $h := \sum_{\lam \in \Lambda(\GG)} \dimm \GG^\lam$ of the dimensions (marks)  is  the Coxeter number   of the corresponding finite Dynkin diagram obtained by deleting the affine node.
\item[{\rm (ii)}] The marks on the nodes of the finite diagram, that is,  the dimensions of the nontrivial $\GG$-modules,   are the coefficients when the highest root of the corresponding root system is  expressed in terms of the simple roots   (see \cite{Bo} or  \cite{Ka} for more details).
\item[{\rm (iii)}] The Cartan matrix of the affine Dynkin diagram is $\mathrm{C} = 2\mathrm{I }- \mathrm{A}$, where $\mathrm{A}= (a_{\mu,\lambda})$ is the adjacency matrix of the graph $\mathcal R_{\Vs}(\GG)$ (i.e. the affine Dynkin diagram) and $\mathrm{I}$ is the identity matrix of the appropriate size.
\item[{\rm (iv)}] The marks are the coordinates of the Perron-Frobenius eigenvector of $\mathrm{A}$.
\item[{\rm (v)}] The eigenvectors  of $\mathrm{C}$ form  the character table of $\GG$. 
\end{itemize}
(Part (v) inspired Steinberg's results mentioned earlier.) 

In \cite{BBH} and \cite{BH} (see also \cite{Be}), we investigate the structure
and representations of  the centralizer algebras $\Zs_k(\GG)$
for the finite subgroups $\GG$ of $\SU_2$.  Among the results established in those papers
is a fruitful relationship between partition algebras and the centralizer algebras $\Zs_k(\GG)$ for $\GG = \TT$ and $\OO$.
Partition algebras were introduced by Martin \cite{M1}  to study the Potts lattice model 
of interacting spins in statistical mechanics, and they have been widely
studied in the last 25 years because of their connections with representations of symmetric groups (see  \cite{J2}, \cite{M2}, \cite{HR},
\cite{BDO}). 

It is well known that $\SU_2$  
has infinitely many finite-dimensional irreducible modules, 
$\Vs(k)$, \  $k=0,1,\dots, $   indexed by the nonnegative integers,
and  $\mathsf{dim}\, \Vs(k) = k+1$.  The module $\Vs(1)$  is
the natural $2$-dimensional $\SU_2$-module $\Vs$.  The module $\Vs(k)$ is isomorphic as an $\SU_2$-module
to the symmetric power $\Ss^k(\Vs)$ of $\Vs$, and hence, it can be identified with the space of
homogeneous polynomials of degree $k$ in two variables.    
The restriction of $\Vs(k)$ to a finite subgroup  $\GG$ of $\SU_2$ has
been investigated by many authors  (\cite{Sl1}, \cite{Sl2},  \cite{G-SV}, \cite{Kn}, \cite{Kos2}, \cite{Ste}) because of 
connections with  Kleinian
singularities.  The Poincar\'e series  $\mathsf{s}^\lam(t)$  for the multiplicities $\mathsf{s}_k^\lam$
of $\GG^\lam$ in the $\GG$-modules $\Ss^k(\Vs)$ for $k \geq 0$  
 has been shown to 
have a particularly  beautiful expression, 

\begin{equation}\label{eq:smu} \mathsf{s}^\lambda(t)   =   \sum_{k \geq 0} \mathsf{ s}_k^\lambda\, t^k =  \frac{ \mathsf{z}^\lam(t)}{(1-t^a)(1-t^b)},\end{equation}
where the numerator $\mathsf{z}^\lam(t)$ is a polynomial in $t$, 
\begin{align}\label{eq:ab}\begin{split}  a & = 2 \cdot \max\{\dimm \GG^\lam  \mid \lam \in \Lambda(\GG)\},  \ \ \ 
\hbox{\rm and}  \\
b &= h+2-a,  \ \ \ \hbox{\rm where $h$ is the Coxeter number.} \end{split} \end{align}  
Kostant (\cite{Kos1,Kos2}) (see also \cite{Kos3, Sp1, Sp2, Ste, Su}) has obtained exact formulas for the polynomials $\mathsf{z}^\lam(t)$
using orbits of an affine Coxeter element on the root system associated to $\GG$.    

The case $\lambda = 0$, which gives the Poincar\'e polynomial
for the $\GG$-invariants in $\Ss(\Vs) = \bigoplus_{k\geq 0} \Ss^k(\Vs)$,  has an especially
simple form (for extensions of this result to multiply laced Dynkin diagrams see \cite{Su} and \cite{Stk}).

\begin{prop}\label{P:syminv} {\rm (\cite{G-SV, Kn, Kos2})}  Let $\GG$ be a finite subgroup of $\SU_2$.    The Poincar\'e series for 
the $\GG$-invariants $\Ss(\Vs)^\GG$ in $\Ss(\Vs) = \bigoplus_{k \geq 0} \Ss^k(\Vs)$  is 
$$\mathsf{s}^0(t)   =  \frac{(1+t^h)}{(1-t^a)(1-t^b)},$$
where $a,b,h$ are as in \eqref{eq:ab}.   \end{prop}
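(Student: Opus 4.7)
The plan is to combine Molien's formula with Klein's classical description of $\Ss(\Vs)^\GG$ as a hypersurface ring. First, since $\Ss(\Vs)\cong\CC[x,y]$ carries a linear action of the finite group $\GG$, Molien's theorem gives
$$\mathsf{s}^0(t) \;=\; \frac{1}{|\GG|}\sum_{g\in\GG} \frac{1}{\det_\Vs(1-tg)}.$$
Every $g\in\GG\subset\SU_2$ has determinant one with eigenvalues $\zeta,\zeta^{-1}$ on $\Vs$, so $\det_\Vs(1-tg)=1-\chi_\Vs(g)t+t^2$. One could in principle finish by evaluating this sum conjugacy-class-by-conjugacy-class for each of $\Cs_n, \DD_n, \TT, \OO, \II$ from its character table and matching against the claimed closed form; this brute-force route works but sheds little conceptual light on the numerator $1+t^h$.

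The conceptually cleaner route is to invoke Klein's theorem that, for each finite $\GG\subset\SU_2$, the invariant ring $\Ss(\Vs)^\GG$ is an affine hypersurface, presented as $\CC[F_1,F_2,F_3]/(R)$ where the three generators are homogeneous of degrees $a,b,h$ (with $a,b,h$ as in \eqref{eq:ab}) and the single relation $R$ has weighted degree $2h$. Explicit generators are well known in every case: for $\Cs_n$ the invariants are $xy, x^n, y^n$ with tautological relation $(xy)^n - x^n y^n = 0$; for $\DD_n, \TT, \OO, \II$ one uses the classical Klein forms whose linear factors correspond to the three exceptional $\GG$-orbits on the Riemann sphere $\Ps^1$, of sizes $|\GG|/a, |\GG|/b, |\GG|/h$ by Riemann--Hurwitz applied to $\Ps^1\to\Ps^1/\GG$.

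Granting this structure, the Hilbert series of the complete intersection is immediate:
$$\mathsf{s}^0(t) \;=\; \frac{1-t^{2h}}{(1-t^a)(1-t^b)(1-t^h)} \;=\; \frac{(1-t^h)(1+t^h)}{(1-t^a)(1-t^b)(1-t^h)} \;=\; \frac{1+t^h}{(1-t^a)(1-t^b)},$$
as claimed. The main obstacle is pinning down the generator and relation degrees: verifying that the three generators really have degrees $a, b, h$ and that the relation occurs in degree exactly $2h$ is the step that, in most treatments, is done by inspection of the classical Klein invariants for each subgroup type. The Molien computation from the first paragraph serves as a useful consistency check on the degree assignments and, combined with the identification of the $h$-degree generator as (up to scalar) the Jacobian of the other two, provides an alternative, if less illuminating, route to the same conclusion.
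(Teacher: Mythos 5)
The paper offers no proof of Proposition \ref{P:syminv}: it is stated as a known result and attributed to \cite{G-SV, Kn, Kos2}, so there is no internal argument to compare against. Your strategy is the standard classical one: present $\Ss(\Vs)^\GG\cong\CC[x,y]^\GG$ as a graded hypersurface $\CC[F_1,F_2,F_3]/(R)$ with generator degrees $a,b,h$ and relation degree $2h$, and read off the Hilbert series of the complete intersection. That last step is sound (the polynomial ring is a domain, so $R$ is a nonzerodivisor and the numerator $1-t^{2h}$ is justified), and the cancellation to $(1+t^h)/\bigl((1-t^a)(1-t^b)\bigr)$ is correct. Your opening Molien computation is exactly the $\lambda=0$ case of the paper's formula \eqref{eq:smol}.

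The genuine weak point is your justification of the generator degrees, which is where all the content lives and which you yourself flag as ``the main obstacle.'' The orbit sizes of the exceptional $\GG$-orbits on $\Ps^1$ are \emph{not} $|\GG|/a$, $|\GG|/b$, $|\GG|/h$: for $\II$ they are $12,20,30$, i.e.\ $a,b,h$ themselves (the orbit size is $|\GG|/(2e)$ for ramification index $e$; your $|\GG|/a=10$ is the order of a point stabilizer in the binary group, not an orbit size). Worse, for $\TT$, $\OO$, and $\DD_n$ the ground forms attached to the exceptional orbits are only relative invariants, so the actual generators are products and powers of them and their degrees do not equal the orbit sizes at all: for $\TT$ the orbits have sizes $4,4,6$ while the generators have degrees $6,8,12$. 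So the Riemann--Hurwitz heuristic you give would assign the wrong degrees in every case except (coincidentally) $\II$, and the step must instead be done by inspecting the classical Klein invariants group by group (or by citing Klein/Du Val), after which the degrees do come out to $a$, $b$, $h$ with relation degree $2h$ in all five families; your explicit $\Cs_n$ verification with $xy$, $x^n$, $y^n$ is correct. As written, then, the proposal is a correct outline resting on a deferred, and partly misstated, case check rather than a complete proof.
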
 
Using different methods, Springer \cite{Sp1} reproved Kostant's results on the Poincar\'e series
$\mathsf{s}^\lam(t)$ and in \cite{Sp2} used a generalization of Molien's formula,
\begin{equation}\label{eq:smol} \mathsf{s}^\lambda(t)   = \frac{1}{\vert \GG \vert}  \sum_{g \in \GG} \frac{\chi_\lambda(g)}{\det_\Vs(\mathrm{I} - g t)}, 
\end{equation}  to describe the character values $\chi_\lambda(g)$  for the exceptional polyhedral groups $\GG$.  
 In \cite{Ro}, Rossmann showed that the character values can also be obtained by evaluating the polynomials
$\mathsf{z}^\lam(t)$ in \eqref{eq:smu} at conjugacy class representatives of $\GG$.   
Suter \cite{Su}  adopted a
different though related way of studying the series $\mathsf{s}^\lambda(t)$ using quantum affine Cartan matrices. 
An extension of the results on the series $\mathsf{s}^\lambda(t)$  to ``semi-affine'' Dynkin diagrams has been given by McKay \cite{Mc3}. 

Our approach to determining the Poincar\'e series $\mm^\lambda(t)$ for the multiplicity
of $\GG^\lam$ in $\Ts(\Vs) = \bigoplus_{k \geq 0} \Vs^{\ot k}$  was inspired by
the methods in \cite{E}, \cite{Ro}, \cite{Su} and \cite{D}.   We first establish a  result 
for arbitrary finite groups and then apply it to finite subgroups of $\SU_2$.  
We show how this leads to new insights and results on the centralizer algebras
$\Zs_k(\GG)$ and on walks on the representation graph $\mathcal R_\Vs(\GG)$.  

\medskip
\textbf{Acknowledgments.} \  I am grateful 
to Dennis Stanton for answering my questions about Chebyshev polynomials.

\begin{section}{Poincar\'e series and Bratteli diagrams } \end{section} 

\begin{subsection}{Poincar\'e series for tensor multiplicities} \end{subsection}

For arbitrary finite groups, we prove  the following result on the Poincar\'e series for
tensor multiplicities. \smallskip 
 
\begin{thm}\label{T:main} Let $\GG$ be a finite group with irreducible modules $\GG^\lambda$, $\lambda \in \Lambda(\GG)$,  over $\CC$, and let $\Vs$ be a fixed finite-dimensional $\GG$-module such  that the action of $\GG$ on $\Vs$ is faithful, and the dual module $\Vs^*$  is isomorphic to $\Vs$ as a $\GG$-module.  Assume $\mm^\mu(t) = \sum_{k \geq 0} \mm_k^\mu t^k$ is the Poincar\'e series for the multiplicities $\mm_k^\mu$ ($k \geq 0$)  of
$\GG^\mu$ in  $\Ts(\Vs) = \bigoplus_{k \geq 0} \Vs^{\ot k}$. 
Let $\mathrm{A} = \big(a_{\mu,\lambda}\big)$ be the adjacency matrix of the 
representation graph $\mathcal R_{\Vs}(\GG)$,  and let  $\mathrm{M}^\mu$ be the 
matrix $\mathrm{I}-t\mathrm{A}$ with the column indexed by $\mu$ replaced by 
$\underline{\delta} = \left (\begin{smallmatrix}  1\\ 0 \\ \vdots \\  \\ 0 \end{smallmatrix}\right )$.   
Then 
\begin{equation}\label{eq:main} \mm^\mu(t) =   \frac{\det(\mathrm{M}^\mu)}{\det(\mathrm{I}-t\mathrm{A})} =
 \frac{\det(\mathrm{M}^\mu)}{\prod_{g \in \Gamma} \left(1- \chi_\Vs(g)t\right)}, \end{equation} 
where $\Gamma$ is a set of conjugacy class representatives of $\GG$. 
\end{thm}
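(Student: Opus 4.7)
The plan is to translate the decomposition of $\Vs^{\ot k}$ into a linear recurrence on the multiplicities, use Cramer's rule to obtain the first equality of \eqref{eq:main}, and then factor the characteristic polynomial of $\mathrm{A}$ by invoking the Steinberg eigenvalue result recalled in the introduction.

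First, I would establish the recurrence $\mm_{k+1}^\lambda = \sum_\mu a_{\mu,\lambda}\,\mm_k^\mu$. This is immediate from \eqref{eq:Got}: tensoring $\Vs^{\ot k} \cong \bigoplus_\mu \mm_k^\mu\,\GG^\mu$ with $\Vs$ and collecting $\GG^\lambda$-isotypic components gives the formula, with initial condition $\mm_0^\mu = \delta_{\mu,0}$. The self-duality $\Vs^* \cong \Vs$ enters via Frobenius reciprocity, giving
\[
a_{\mu,\lambda} \;=\; \dimm \mathrm{Hom}_{\GG}(\GG^\lambda,\,\GG^\mu \ot \Vs) \;=\; \dimm \mathrm{Hom}_{\GG}(\GG^\mu,\,\GG^\lambda \ot \Vs^*) \;=\; a_{\lambda,\mu},
\]
so $\mathrm{A}$ is symmetric. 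Viewing $\underline{\mm}_k = (\mm_k^\lambda)_\lambda$ as a column vector, the recurrence reads $\underline{\mm}_{k+1} = \mathrm{A}\,\underline{\mm}_k$.

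Multiplying by $t^{k+1}$ and summing over $k \geq 0$ collapses the recurrence into the single generating-function identity $(\mathrm{I} - t\mathrm{A})\,\underline{\mm}(t) = \underline{\delta}$, where $\underline{\mm}(t)$ is the column of Poincar\'e series. Because $\det(\mathrm{I} - t\mathrm{A})$ has constant term $1$, the matrix is invertible over $\CC[[t]]$, and Cramer's rule yields $\mm^\mu(t) = \det(\mathrm{M}^\mu)/\det(\mathrm{I} - t\mathrm{A})$, the first equality of \eqref{eq:main}. For the denominator identity, I would invoke the Steinberg result from the introduction: the eigenvalues of $\mathsf{d}\mathrm{I} - \mathrm{A}$ are $\mathsf{d} - \chi_\Vs(g)$ for $g$ ranging over a set $\Gamma$ of conjugacy class representatives, so the eigenvalues of $\mathrm{A}$ are precisely the character values $\chi_\Vs(g)$, whence $\det(\mathrm{I} - t\mathrm{A}) = \prod_{g \in \Gamma}(1 - \chi_\Vs(g)\,t)$.

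The only delicate point, and the one that forces the hypothesis $\Vs^* \cong \Vs$, is the orientation of $\mathrm{A}$: in general the recurrence on column vectors is $\underline{\mm}_{k+1} = \mathrm{A}^T \underline{\mm}_k$, and it is precisely the symmetry $\mathrm{A}^T = \mathrm{A}$ that makes Cramer's rule produce the answer as a \emph{column} replacement (rather than a row replacement) in $\mathrm{I} - t\mathrm{A}$, matching the definition of $\mathrm{M}^\mu$ in the statement. Beyond this bookkeeping, the proof is essentially a one-line application of Cramer's rule plus Steinberg's theorem.
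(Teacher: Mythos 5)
Your proof is correct and follows essentially the same route as the paper: a first-order recurrence among the multiplicities coming from tensoring with $\Vs$, the matrix identity $(\mathrm{I}-t\mathrm{A})\,\underline{\mm}(t)=\underline{\delta}$, Cramer's rule, and Steinberg's eigenvalue theorem to factor $\det(\mathrm{I}-t\mathrm{A})$. The only (harmless) difference is where $\Vs^*\cong\Vs$ enters --- the paper uses it inside the Hom-space shift $\mathsf{Hom}_\GG(\GG^\mu,\Vs^{\ot k})\cong\mathsf{Hom}_\GG(\GG^\mu\ot\Vs,\Vs^{\ot(k-1)})$ to land directly on the recurrence with $\mathrm{A}$, whereas you derive the transposed recurrence from the module decomposition and then use self-duality to symmetrize $\mathrm{A}$; your remark about column versus row replacement is exactly the right bookkeeping point.
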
 

\begin{proof}   Our  proof of the first equality comes from the following computation, which uses the fact that  
the  multiplicity $\mm_k^\mu = \mathsf{dim}\left( \mathsf{Hom}_\GG(\GG^\mu, \Vs^{\ot k})\right)$.   
\begin{align}\label{eq:mt}  
\mm^\mu(t) &= \sum_{k \geq 0}  \mm_{k}^\mu \, t^k  =\sum_{k \geq 0}\mathsf{dim}\left( \mathsf{Hom}_\GG(\GG^\mu, \Vs^{\ot k})\right) t^k  \\
&= \delta_{\mu,0} + t \sum_{k \geq 1}\mathsf{dim}\left(\mathsf{Hom}_\GG(\GG^\mu, \Vs^{\ot k})\right)t^{k-1}  \nonumber \\
&= \delta_{\mu,0} + t \sum_{k \geq 1}\mathsf{dim}\left(\mathsf{Hom}_\GG(\GG^\mu \ot \Vs, \Vs^{\ot {(k-1)}})\right)t^{k-1} \qquad (\Vs \cong \Vs^*)\nonumber   \\ 
&= \delta_{\mu,0} + t \sum_{k \geq 0}\mathsf{dim}\left(  \mathsf{Hom}_\GG(\GG^\mu \ot \Vs, \Vs^{\ot {k}})\right)t^{k}\nonumber  \\
&= \delta_{\mu,0} +t \sum_{k \geq 0} \mathsf{dim}\left( \mathsf{Hom}_\GG\left( \sum_{\lambda \in \Lambda(\GG)} a_{\mu,\lambda} \GG^\lambda, \Vs^{\ot {k}}\right)\right)t^{k} \nonumber \\
&= \delta_{\mu,0} + t\sum_{\lambda \in \Lambda(\GG)} a_{\mu,\lambda}\sum_{k \geq 0}\mathsf{dim}\left(\mathsf{Hom}_\GG(\GG^\lambda, \Vs^{\ot k})\right)t^{k} \nonumber \\
&= \delta_{\mu,0} + t\sum_{\lambda \in \Lambda(\GG)} a_{\mu,\lambda}\left(\sum_{k \geq 0} \mm_k^\lambda \,  t^{k} \right) \nonumber\\
&= \delta_{\mu,0} +  t \sum_{\lambda \in \Lambda(\GG)} a_{\mu,\lambda} \  \mm^\lambda(t). \nonumber\end{align} 

Assuming  $\underline{\mm} = ( \mm^\lambda(t))$ is the column vector with entries $\mm^\lambda(t)$ as $\lambda$ ranges over
the elements of $\Lambda(\GG)$,  and  $\underline{\delta}$ is as in the theorem,
we have the following restatement of the result in \eqref{eq:mt} in matrix language,

$$(\mathrm{I} - t \mathrm{A}) \underline{\mm} = \underline{\delta}.$$
Now  $\mathrm{I} - t \mathrm{A}$ is an invertible matrix, since it is
equivalent to  the identity matrix $\mathrm{I}$  modulo the ideal of $\CC[t]$ generated by $t$.  The remainder of the proof of the first equality in \eqref{eq:main} just amounts to applying Cramer's rule
to solve for the series $\mm^\mu(t)$.      

Assume $\mathsf{d} = \dimm \Vs$,  and $\chi_\Vs$ is the character of $\Vs$.  Then by Steinberg's
result, the eigenvalues of $\mathsf{d}\mathrm{I}-\mathrm{A}$ are $\mathsf{d} - \chi_\Vs(g)$ 
where  $g \in\Gamma$, a set of conjugacy class representatives of $\GG$.  Hence,  
$$0 = \det\left(\big(\mathsf{d}-\chi_\Vs(g)\big)\mathrm{I} - \big(\mathsf{d}\mathrm{I}-\mathrm{A}\big)\right) =  \det (-\chi_\Vs(g)\mathrm{I} +\mathrm{A}), $$ which implies that 
$$\det(t \mathrm{I} - \mathrm{A})  =  \prod_{g \in \Gamma} \left(t-\chi_{\Vs}(g)\right).$$  
Replacing $t$ with $t^{-1}$ gives 
$\det(t^{-1} \mathrm{I} - \mathrm{A})  =  \prod_{g \in \Gamma} \left(t^{-1}-\chi_{\Vs}(g)\right)$, 
and multiplying both sides of that relation by $t^n$,  where  $n = \vert \Lambda(\GG)\vert =\vert \Gamma \vert$, then shows
$$ \det(\mathrm{I} - t\mathrm{A})  = t^n\det(t^{-1} \mathrm{I} - \mathrm{A})  = t^n \prod_{g \in \Gamma} \left(t^{-1}-\chi_{\Vs}(g)\right) = \prod_{g \in \Gamma}  \left(1-\chi_{\Vs}(g)t\right),$$
which provides the second equality in \eqref{eq:main}.  \end{proof} 

\begin{remark}\label{R:mmu}  When $\GG$ acts faithfully on $\Vs$, every irreducible $\GG$-module occurs
in some tensor power of $\Vs$, so  $\det(\Ms^\mu)$ and $\mm^\mu(t)$ are nonzero for all $\mu \in \Lambda(\GG)$.

 It is a consequence of \eqref{eq:mt}  that  
\begin{equation} \label{eq:mt2} \mm^\mu(t)  =  \delta_{\mu,0} +  t \sum_{\lambda \in \Lambda(\GG)} a_{\mu,\lambda} \  \mm^\lambda(t)
\end{equation}
which can be used to compute $\mm^\mu(t)$ from the series  $\mm^\lambda(t)$ for the nodes $\lambda$ connected to $\mu$ in the
representation graph $\mathcal R_{\Vs}(\GG)$.  
This is especially helpful (and efficient) for determining the Poincar\'e series for the finite subgroups $\GG$ of $\SU_2$. \end{remark} 

\begin{subsection} {$\Sr_4$ example}  \end{subsection} 
The irreducible modules for the symmetric group  $\Sr_n$ are indexed by partitions $\lam$  of $n$,  written
 $\lambda \vdash n$.  
Thus,  $\lambda$ is a  sequence $(\lambda_1,\dots, \lambda_\ell)$ of  weakly decreasing  nonnegative integers  such that the sum $\vert \lambda \vert:= \sum_{i=1}^\ell  \lambda_i = n$.  
In particular, when $n = 4$,  there are 5 irreducible modules $\Sr_4^\lambda$,  where $\lambda$ is one of the  following partitions:  $(4)$, $(3,1)$, $(2^2) = (2,2)$,
$(2,1^2) = (2,1,1)$, and $(1^4) = (1,1,1,1)$.    The module $\Sr_4^{(4)}$ indexed by the one-part 
partition (4) is the trivial $\Sr_4$-module, and $\Sr_4^{(1^4)}$ corresponds to the one-dimensional  sign representation.     
In the  character table for $\Sr_4$,  we have indicated a representative permutation 
for each conjugacy class across the top row.

\begin{table}[h]
$$\begin{tabular} {c|cccccccc} 
$\lambda\setminus g$ & $(1)$  & $(12)$  & $(123)$  & $(1234)$  & $(12)(34)$   \\ \hline
$(4)$  & $1$ & \ \, $1$ &\,\ $1$ & \ \, $1$ &\, \ $1$   \\
$(3,1)$  & $3$ &\, \ $1$ &\, \ $0$ & $-1$ & $-1$    \\
$(2^2)$ & $2$ &\, \ $0$ & $-1$ &\,  \ $0$ &\, \ $2$ \\
$(2,1^2)$ & $3$ & $-1$ &\, \  $0$ &\, \ $1$ & $-1$ \\
$(1^4)$ & $1$ & $-1$ &\, \ $1$ & $-1$ &\, \ $1$ 
\end{tabular}$$
\caption{Character table for $\Sr_4$} \label{table:char}
\end{table} 
Using the fact  that the character of a tensor product is the product of the characters of the factors,
we see that the representation graph $\mathcal R_{\Vs}(\Sr_4)$  for $\Vs = \Sr_4^{(3,1)}$ and its
adjacency matrix $\mathrm{A}$ are  \medskip

 \begin{figure}[ht!]
$\includegraphics[scale=1,page=1]{poincare-diagrams.pdf}$ 

\vspace{-3cm} $\hspace{8cm} \mathrm{A} = \left(\begin{matrix} 0 & 1 & 0 & 0 & 0 \\ 1 & 1 & 1 & 1 & 0 \\
0 & 1 & 0 & 1 & 0 \\ 0 & 1 & 1 & 1 &  1\\ 0 & 0 & 0 & 1 & 0 \\ \end{matrix} \right)\  . 
$
\caption{Representation graph $\mathcal{R}_\Vs(\Sr_4)$ and its adjacency matrix $\mathrm A$  for $\Vs = \Sr_4^{(3,1)}$}\label{fig:Rs4}
\end{figure} 

Applying Theorem \ref{T:main}  and using  the second row of the character table, we have    
\begin{align*} \det(\mathrm{I} - t\mathrm{A}) & = \textstyle{\prod_{g \in \Gamma}} \big(1-\chi_{(3,1)}(g)\,t\big) = (1-3t)(1-t)(1+t)^2 \\
&= 1-2t-4t^2+2t^3 +3t^4. \end{align*}
This leads to the following expressions  for $\mm^\lam(t) =  \displaystyle{\frac{\det(\mathrm{M}^\lam)}{\det(\mathrm{I}-t\mathrm{A})}}$: 

\begin{align}\label{eq:S4ex}
\mm^{(4)}(t) = \displaystyle{\frac{1-2t-3t^2+t^3+t^4}{1-2t-4t^2+2t^3 +3t^4}} &=1+t^2+t^3+4t^4+10t^5+31t^6 + \cdots   \nonumber   \\   
\mm^{(3,1)}(t) = \displaystyle{\frac{t-t^2-2t^3}{1-2t-4t^2+2t^3 +3t^4}} & = t+t^2+4t^3+10t^4+31t^5+91t^6 + \cdots  \nonumber  \\
\mm^{(2^2)}(t) = \displaystyle{\frac{t^2-t^4}{1-2t-4t^2+2t^3 +3t^4}} & = t^2+2t^3+7t^4+20t^5+61t^6 + \cdots \nonumber  \\
\mm^{(2,1^2)}(t) = \displaystyle{\frac{t^2+t^3}{1-2t-4t^2+2t^3 +3t^4}} & = t^2+3t^3+10t^4+30t^5 +91 t^6 + \cdots \nonumber  \\
\mm^{(1^4)}(t) = \displaystyle{\frac{t^3+t^4}{1-2t-4t^2+2t^3 +3t^4}} & = t^3+3t^4+10t^5+30t^6 + \cdots \nonumber  \\
 \end{align}

Next, we connect the Poincar\'e series for tensor multiplicities in Theorem \ref{T:main}  to 
Bratteli diagrams. 
 
\begin{subsection}{Bratteli diagrams} \end{subsection} 
The \emph{Bratteli diagram} $\mathcal{B}_{\Vs}(\GG)$ associated to the group  $\GG$ and  the module $\Vs$ 
 is the infinite graph  with vertices labeled by the elements of  $\Lambda_k(\GG)$ on level $k$.      A walk of $k$ steps on the representation graph $\mathcal{R}_{\Vs}(\GG)$ from $0$ to $\lam$ is a sequence $\left(0,{\lam^1}, {\lam^2}, \ldots, {\lam^k}=\lam\right)$  starting at $\lam^0 = 0$,   such that  $\lam^j \in \Lambda(\GG)$   for each $1 \le j \le k$, and  $\lam^{j-1}$ is connected to  ${\lam^{j}}$ by an edge in $\mathcal{R}_{\Vs}(\GG)$.   Such a walk is equivalent to a unique path of length $k$ on the Bratteli diagram $\mathcal{B}_{\Vs}(\GG)$ from $0$
 at the top  to $\lam \in \Lambda_k(\GG)$ on level $k$.   
 
The subscript on vertex  $\lam \in \Lambda_k(\GG)$ in $\mathcal{B}_\Vs(\GG)$  indicates  the number  
$\mm_k^\lam$  of paths from $0$ on the top  to $\lam$ at level $k$ (hence, the number of walks on $\mathcal R_\Vs(\GG)$ of $k$ steps
from $0$ to $\lam$).  It can be easily  computed by summing, in a Pascal triangle fashion, the subscripts of the vertices at level $k-1$ that are connected to $\lam$.  This is  the multiplicity of
$\GG^\lam$ in $\Vs^{\ot k}$, which is also the 
dimension of the irreducible $\Zs_k(\GG)$-module  $\Zs_k^\lam$ by Schur-Weyl duality.  The sum of the squares of
those dimensions at level $k$ is the number on the right, which is the dimension of the centralizer algebra $ \Zs_k(\GG)$.
 
 The Bratteli diagram  for the $\mathbf{S}_4$ example in the previous section  is displayed in Figure \ref{fig:Bs4} below.
 The coefficients of the series in \eqref{eq:S4ex}  are the multiplicities $\mm_k^\lam$, 
and hence,  they are the subscripts on the node $\lam$ 
in the Bratteli diagram reading down the column containing $\lambda$.   They are the dimensions of the irreducible modules $\Zs_k^\lam$ for the centralizer algebras $\Zs_k(\Sr_4) = \End_{\Sr_4}(\Vs^{\ot k})$ for $\Vs = \Sr_4^{(3,1)}$ and $k=0,1,\dots$.  The sum of the
squares of the subscripts in a given row $k$ is the dimension of $\Zs_k(\Sr_4)$ and is the
number on the right.    The series $\mm^{(4)}(t)$ is the Poincar\'e series  for the $\Sr_4$-invariants
in $\Ts(\Vs) = \bigoplus_{k \geq 0} \Vs^{\ot k}$.  The series $ \sum_{k \geq 0} \mm_{2k}^{(4)} t^{2k}$, which corresponds to 
the right-hand column,  is the generating function 
for the dimensions $\dimm \Zs_k(\Sr_4)$ of the centralizer algebras (and also for the $\Sr_4$-invariants in $\Ts^{\mathsf{even}}(\Vs) = \bigoplus_{k \geq 0} \Vs^{\ot (2k)}$\,).    
\begin{figure}[ht!] 
$$
\begin{array}{c} \includegraphics[scale=1,page=2]{poincare-diagrams.pdf} \end{array}
$$
\vspace{-.3cm} 
\caption{Levels $k=0,1,\dots,6$ of the Bratteli diagram  $\mathcal{B}_\Vs(\Sr_4)$ for $\Vs = \Sr_4^{(3,1)}$}\label{fig:Bs4}
\end{figure} 

\medskip
  
\begin{section}{ Poincar\'e series for the finite subgroups of $\SU_2$} \end{section} 
When  $\GG$ is one of the finite subgroups $\Cs_n$, $\DD_n$, $\TT$, $\OO$, $\II$ of $\SU_2$,
and $\Vs = \CC^2$,  the defining 2-dimensional module for $\GG$, we have the following immediate consequence of  the McKay correspondence and Theorem \ref{T:main}.
\medskip
 
\begin{thm} \label{T:mainsu}  Let $\GG$ be a finite subgroup of $\SU_2$ and $\Vs = \CC^2$.  Then the Poincar\'e series for the $\GG$-invariants $\Ts(\Vs)^\GG$  in $\Ts(\Vs) = \bigoplus_{k \geq 0}\Vs^{\ot k}$ is

\begin{equation}\label{eq:inv}\mm^{0}(t) =  \frac{\det \left(\mathrm{I} - t \angstrom \right)}{\det \left(\mathrm{I} - t \mathrm{A}\right)}
=  \frac{\det \left(\mathrm{I} - t  \angstrom \right)}{\prod_{g \in \Gamma} \left(1- \chi_\Vs(g)t\right)},
\end{equation}
where $\mathrm{A}$ is the adjacency matrix of the representation graph $\mathcal{R}_{\Vs}(\GG)$ (i.e. the affine Dynkin diagram corresponding to $\GG$ in \eqref{eq:dd});  \ $\angstrom$ is the adjacency matrix of the finite Dynkin diagram
obtained by removing the affine node;  and $\chi_\Vs(g)$ is the value of the character
$\chi_{\Vs}$ at $g \in \Gamma$, a set of conjugacy class representatives for $\GG$.
\end{thm}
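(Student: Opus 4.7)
The plan is to deduce Theorem \ref{T:mainsu} as a direct specialization of Theorem \ref{T:main} to the case $\mu = 0$, with the extra geometric input coming from the McKay correspondence. Since $\SU_2$ acts faithfully on $\Vs = \CC^2$ and the natural module $\Vs$ is self-dual (every element $g \in \SU_2$ is conjugate in $\SU_2$ to its inverse transpose, or equivalently the character $\chi_\Vs(g) = x + \bar{x}$ is real-valued), both hypotheses of Theorem \ref{T:main} hold for any finite subgroup $\GG \subseteq \SU_2$. Thus Theorem \ref{T:main} gives immediately
\begin{equation*}
\mm^0(t) = \frac{\det(\mathrm{M}^0)}{\det(\mathrm{I} - t\mathrm{A})} = \frac{\det(\mathrm{M}^0)}{\prod_{g \in \Gamma}(1 - \chi_\Vs(g)t)}.
\end{equation*}

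The entire content of the theorem therefore reduces to the identification $\det(\mathrm{M}^0) = \det(\mathrm{I} - t\angstrom)$. First I would recall the construction: $\mathrm{M}^0$ is obtained from $\mathrm{I} - t\mathrm{A}$ by replacing the column indexed by the affine node $0$ with the vector $\underline{\delta} = (1, 0, \ldots, 0)^T$. Next I would expand the determinant along this distinguished column. Since $\underline{\delta}$ has a single nonzero entry (equal to $1$) in the row also indexed by $0$, the Laplace expansion collapses to one term:
\begin{equation*}
\det(\mathrm{M}^0) = (-1)^{0+0} \cdot 1 \cdot \det\bigl( (\mathrm{I} - t\mathrm{A})_{\widehat{0},\widehat{0}} \bigr),
\end{equation*}
where the subscript denotes deletion of the row and column indexed by $0$.

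The last step is to recognize the resulting minor. By the McKay correspondence, the representation graph $\mathcal{R}_\Vs(\GG)$ is the simply laced affine Dynkin diagram associated to $\GG$, and removing the affine node $0$ leaves the corresponding finite Dynkin diagram whose adjacency matrix is precisely $\angstrom$. Deleting the $0$-row and $0$-column of $\mathrm{I} - t\mathrm{A}$ therefore produces $\mathrm{I} - t\angstrom$, giving $\det(\mathrm{M}^0) = \det(\mathrm{I} - t\angstrom)$ and completing the first equality. The second equality in \eqref{eq:inv} is then inherited verbatim from Theorem \ref{T:main}, using the Steinberg-type factorization $\det(\mathrm{I} - t\mathrm{A}) = \prod_{g \in \Gamma}(1 - \chi_\Vs(g)t)$ established there.

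There is essentially no obstacle here — the proof is structural and amounts to a one-line cofactor expansion combined with the combinatorial content of the McKay correspondence. The only point requiring even a moment of care is verifying self-duality of $\Vs$ as a $\GG$-module so that Theorem \ref{T:main} applies; this is handled by the observation that the character $\chi_\Vs(g) = \mathrm{trace}(g)$ is real for $g \in \SU_2$, equivalently $\Vs \cong \Vs^*$ via the $\SU_2$-invariant symplectic form on $\CC^2$.
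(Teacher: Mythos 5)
Your proof is correct and is exactly the argument the paper intends: the paper states Theorem \ref{T:mainsu} as an ``immediate consequence'' of Theorem \ref{T:main} and the McKay correspondence, and your cofactor expansion along the replaced column, identifying the $(0,0)$-minor of $\mathrm{I}-t\mathrm{A}$ with $\mathrm{I}-t\angstrom$, is precisely the one-line verification being left implicit. Your checks of faithfulness and self-duality of $\Vs$ (via the real-valued character, equivalently the invariant symplectic form) correctly confirm the hypotheses of Theorem \ref{T:main}.
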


\begin{remark}  Theorem \ref{T:mainsu}  can be regarded as an analog of Ebeling's theorem \cite{E} (see also \cite[Sec.~5.5]{Stk})
for  finite subgroups $\GG$ of $\SU_2$,  which 
relates the Poincar\'e series $\mathsf{s}^0(t)$ for the $\GG$-invariants in  the symmetric algebra $\mathsf{S}(\Vs) = \bigoplus_{k\geq 0} \mathsf{S}^k(\Vs)$  to the characteristic polynomial $\mathsf{ch}^{{}^{\boldsymbol{\circ}}}(t)$ (resp. 
$\mathsf{ch}(t)$)  of a Coxeter transformation  (resp. of an affine Coxeter transformation) associated to $\GG$,

\begin{equation} \label{eq:Eb}  \mathsf{s}^0(t) =  \frac{\mathsf{ch}^{{}^{\boldsymbol{\circ}}}(t)}{\mathsf{ch}(t)}. \end{equation} 
\end{remark}
\medskip

Coxeter transformations (resp. affine Coxeter transformations) are  products  of reflections corresponding to the 
simple roots  (resp. affine simple roots).   There is one reflection in the product  for 
each node in the finite (resp. affine) Dynkin diagram.     There is a close
connection between  the spectrum of a Coxeter transformation and  the spectrum of the associated Cartan matrix  $\mathrm{C}$
which was described  in  \cite{BLM}  (see also \cite{C} for Dynkin
diagrams with odd cycles).    Eigenvalues  of $\mathrm{C}$ occur in pairs $\xi, 4-\xi$,   
and for each such a pair, $2-\xi$ and $\xi-2$ are eigenvalues of the 
adjacency matrix  of the Dynkin diagram.    
The results of \cite{BLM} and \cite{C} (see also \cite{Ste} and the discussion in \cite{D} for the finite diagrams)  imply that the 
eigenvalues of $\angstrom$  (resp. $\mathrm A$)  are
given by  $2\cos\left(\pi m/h\right)$ (resp.  $2\cos\big(\pi \widehat{m}/\what{h}\big)$)  where $m$ (resp. $\what m$)  ranges over
the exponents,  and $h$ (resp. $\what h$) is the Coxeter number (resp. affine Coxeter number).      In Table \ref{tab:expo-Cox},  
we display these exponents and numbers for the simply laced diagrams.    For multiply laced diagrams, they 
can be found in \cite[Table 1]{BLM}.

\begin{remark}  In the case not covered in Table \ref{tab:expo-Cox}, namely $\what {\mathsf{A}}_{2\ell}$,  there are $\ell$ conjugacy classes of Coxeter transformations
having  different spectra.  This case corresponds to the cyclic group $\Cs_{2\ell+1}$ of odd order,  which will be excluded in the next theorem.  
The characteristic polynomials  of the affine Coxeter transformations  for $\what {\mathsf{A}}_{2\ell}$ have been computed by Coleman \cite{C}.
The Poincar\'e polynomial $\mm^0(t)$ for all cyclic groups  is given  in Theorem \ref{T:cyc} of  Section \ref{S:cyc} below.  \end{remark} 

\begin{table}
$$\begin{tabular} {|c|c|c|} 
\hline
Dynkin &  Exponents   & Coxeter     \\
diagram & & number \\  \hline \hline
$\mathsf{A}_{n-1}$  & $1,2,\dots, n-1$ & \ $n$    \\ \hline
$\mathsf{D}_{n+2}$   & $1,3,\dots, 2n+1, n+1$ &\ $2n+2$    \\ \hline
$\mathsf{E}_6$  & 1, 4, 5, 7, 8, 11 & \ 12   \\  \hline
$\mathsf{E}_7$ & 1, 5, 7, 9, 11, 13, 17 & \ 18   \\  \hline
$\mathsf{E}_8$ & 1, 7, 11, 13, 17, 19, 23, 29 & \ 30   \\  \hline
$\what{\mathsf{A}}_{2\ell+1}$  & $0,1,1,\dots,\ell,\ell,\ell+1$ & \ $\ell+1$    \\  \hline
$\what{\mathsf{D}}_{2\ell+1}$   & $0,2,\dots,2\ell-2,2\ell-1,2\ell-1, 2\ell, \dots, 2(2\ell-1)$ &\ $2(2\ell-1)$    \\  \hline
$\what{\mathsf{D}}_{2\ell}$   & $0,1,\dots,\ell-1,\ell-1,\ell-1, \ell, \dots, 2\ell-2$ &\ $2\ell-2$    \\  \hline
$\what{\mathsf{E}}_6$  & 0, 2, 2, 3, 4, 4, 6 & \ 6   \\  \hline
$\what{\mathsf{E}}_7$ & 0, 3, 4, 6, 6, 8, 9,12 & \ 12 \\  \hline
$\what{\mathsf{E}}_8$ & 0, 6, 10, 12, 15, 18, 20, 24, 30 & \ 30   \\ 
\hline
\end{tabular}$$
\caption{Exponents and Coxeter numbers} \label{tab:expo-Cox}
\end{table} 
 \newpage

\begin{thm}\label{T:expo}  Let $\GG$ be a finite subgroup of $\SU_2$ such that $\GG \not \cong  \Cs_n$ for $n$ odd  and let  $\Vs = \CC^2$. 
Assume  $\mathrm A$ is the adjacency matrix of the representation graph $\mathcal R_{\Vs}(\GG)$ (the affine Dynkin diagram) and 
$\angstrom$ is  the adjacency matrix of the corresponding finite Dynkin diagram.   Let 
$\what{\Xi}$ (resp. $\Xi$ ) be the set of exponents 
and $\what {h}$  (resp. $h$) be the Coxeter number corresponding to the affine (resp. finite) Dynkin diagram.
Then  the Poincar\'e series for the $\GG$-invariants $\Ts(\Vs)^{\GG}$  in $\Ts(\Vs) = \bigoplus_{k \geq 0}\Vs^{\ot k}$ is
\begin{equation}\label{eq:cos}\mm^{0}(t) =  \frac{\det \left(\mathrm{I} - t  \angstrom \right)}{\det \left(\mathrm{I} - t \mathrm{A}\right)}
=  \frac{\det \left(\mathrm{I} - t  \angstrom \right)}{\prod_{g \in \Gamma} \left(1- \chi_\Vs(g)t\right)} =  \frac{\prod_{m \in \Xi} \left(1-2\cos\left(\frac{\pi m}{h}\right) t\right)}{\prod_{\what m \in \what{\Xi}} \left(1-2\cos\left(\frac{\pi \what m}{\what h}\right) t\right)}.\end{equation} \end{thm}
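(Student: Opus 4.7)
The first two equalities in \eqref{eq:cos} are immediate from Theorem \ref{T:mainsu}, so the content of Theorem \ref{T:expo} is the third equality; that is, the factorizations
\begin{equation*}
\det(\mathrm{I} - t\angstrom) = \prod_{m \in \Xi}\!\left(1 - 2\cos\!\tfrac{\pi m}{h}\, t\right), \qquad
\det(\mathrm{I} - t\mathrm{A}) = \prod_{\what m \in \what\Xi}\!\left(1 - 2\cos\!\tfrac{\pi \what m}{\what h}\, t\right).
\end{equation*}
My plan is simply to match eigenvalues to linear factors and invoke the spectral description from \cite{BLM} and \cite{C} recalled just before the statement.

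First I would observe that $\angstrom$ and $\mathrm{A}$ are adjacency matrices of undirected (simply laced Dynkin) graphs, hence symmetric real matrices, and therefore diagonalizable with real eigenvalues. For any diagonalizable matrix $\mathrm{M}$ of size $n$ with eigenvalues $\theta_1,\dots,\theta_n$ (counted with multiplicity), one has the standard identity
\begin{equation*}
\det(\mathrm{I} - t\mathrm{M}) \;=\; \prod_{i=1}^n (1-\theta_i t),
\end{equation*}
obtained by factoring the characteristic polynomial. Applied to $\angstrom$ and $\mathrm{A}$, this reduces the problem to identifying the eigenvalues, with multiplicities, in each case.

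Second I would cite the result of \cite{BLM,C} quoted in the paragraph preceding the theorem: excluding the $\what{\mathsf A}_{2\ell}$ case, the multiset of eigenvalues of $\angstrom$ (resp.\ of $\mathrm A$) is exactly $\{2\cos(\pi m/h):m\in\Xi\}$ (resp.\ $\{2\cos(\pi\what m/\what h):\what m\in\what\Xi\}$). Here the exponent lists in Table \ref{tab:expo-Cox} already carry the correct multiplicities and the correct cardinalities, namely $|\Xi|$ equals the number of nodes of the finite Dynkin diagram (size of $\angstrom$) and $|\what\Xi|$ equals the number of nodes of the affine diagram (size of $\mathrm A$), so the factor counts on the two sides of each equality match. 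Substituting these factorizations into the formula \eqref{eq:inv} from Theorem \ref{T:mainsu} gives the third expression in \eqref{eq:cos}.

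The only bookkeeping to watch is the handling of repeated exponents (e.g.\ the triple $\ell{-}1,\ell{-}1,\ell{-}1$ for $\what{\mathsf D}_{2\ell}$, or $\ell,\ell$ for $\what{\mathsf A}_{2\ell+1}$); these are precisely the higher-multiplicity eigenvalues of $\mathrm A$, and Table \ref{tab:expo-Cox} has been tabulated to encode this. The sole obstruction is the case $\what{\mathsf A}_{2\ell}$, corresponding to $\GG\cong\Cs_{2\ell+1}$, in which the affine Coxeter transformations fall into several non-conjugate classes with distinct spectra and a single canonical ``exponent list'' is not available; this is the reason for the hypothesis $\GG\not\cong \Cs_n$ with $n$ odd, and it will be treated separately in Theorem \ref{T:cyc}.
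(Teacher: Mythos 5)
Your proposal is correct and follows essentially the same route as the paper, which states Theorem \ref{T:expo} as an immediate consequence of Theorem \ref{T:mainsu} together with the spectral description of $\angstrom$ and $\mathrm{A}$ via exponents from \cite{BLM} and \cite{C} recalled in the preceding paragraph (including the exclusion of $\what{\mathsf A}_{2\ell}$, i.e.\ $\Cs_n$ with $n$ odd). Your explicit factorization $\det(\mathrm{I}-t\mathrm{M})=\prod_i(1-\theta_i t)$ and the remark on multiplicities just make precise what the paper leaves implicit.
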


\begin{remark}  It is a consequence of \eqref{eq:cos} that the character values $\chi_\Vs(g)$ as $g$ ranges over a set $\Gamma$ of conjugacy class
representatives of $\GG$ are exactly the values  $2\cos\left(\pi \what m/ \what h\right)$, 
where $\what m$  is an  exponent in $\what \Xi$ and $\what h$ is the affine Coxeter number.  \end{remark}
 
 In subsequent sections,  we will derive other closed-form expressions for the Poincar\'e series $\mm^{0}(t)$ for 
all the finite subgroups of $\SU_2$.  In  the case of the
exceptional polyhedral groups $\GG$, we also consider  the Poincar\'e series $\mm^{\lam}(t)$ for all $\lambda \in \Lambda(\GG)$.   
Kor\'anyi \cite{Kor} has shown that the characteristic polynomial of the finite
Cartan matrices  of types $\mathsf{A,B,C,D}$ have expressions involving Chebyshev polynomials
of both the first and second kind.  In \cite{D},  Damianou has given an expression for the characteristic polynomial
of $\angstrom$ for all finite Dynkin diagrams equivalent to the one in the numerator  
of \eqref{eq:cos} and related these polynomials to Chebyshev polynomials. 
Since the closed-form expressions discussed here also will involve 
Chebyshev polynomials, we briefly review some facts about them. 

\begin{subsection}{Chebyshev polynomials} \end{subsection} 

The Chebyshev polynomials $\Tr_n(t)$  of the first kind are a set of orthogonal polynomials defined by the
recursion 
$$\Tr_0(t) = 1, \ \ \  \Tr_1(t) = t, \quad  \Tr_{n+1}(t) = 2t\Tr_n(t)  - \Tr_{n-1}(t)  \quad \hbox{\rm for all} \ \  n \geq 1.$$
Thus, the first few polynomials for $n \geq 2$ are 
\begin{align*} 
\Tr_2(t) &=	2t^2-1 \\	
\Tr_3(t)  & =	4t^3-3t \\	
\Tr_4(t) &=	8t^4-8t^2+1\\	
\Tr_5(t) &=	16t^5-20t^3+5t \\	
\Tr_6(t) &=	32t^6-48t^4+18t^2-1.\end{align*} 
The Chebyshev polynomials of the first kind play a critical role in approximating functions, where  the roots of the polynomials $\Tr_n(t)$, which can be expressed
in terms of cosines,  are used as nodes in polynomial interpolation.  The polynomials  have the following closed-form expressions:
\begin{align}\label{eq:T1} \Tr_n(t) & = \sum_{r = 0}^{\lfloor n/2\rfloor} {n  \choose {2r}}  t^{n-2r}(t^2-1)^r = t^n \sum_{r=0}^{\lfloor n/2\rfloor} {n \choose {2r}} (1-t^{-2})^r \\
& = 2^{n-1} \prod_{r=1}^n \left(t - \cos\left ( \textstyle{\frac{(2r-1)\pi}{2n}}\right)\right),  \label{eq:T2}
\end{align} which can be found in \cite{Ri} (see also \cite{D}).  

The Chebyshev polynomials $\Ur_n(t)$  of the second kind appear in the study of spherical harmonics in angular
momentum theory and in many other areas of mathematics and physics.  They have a similar recursive definition, 
 $$\Ur_0(t) = 1, \ \ \  \Ur_1(t) =2t, \quad  \Ur_{n+1}(t) = 2t\Ur_n(t)  - \Ur_{n-1}(t)  \quad \hbox{\rm for all} \ \  n \geq 1.$$
 A slight variation of these polynomials,  which arises frequently and will be useful in what follows, are the polynomials
 $\pr_n(t)  =  \Ur_n(t/2)$,   which satisfy the relations
\begin{equation}\label{eq:pr} \pr_0(t) = 1, \ \ \  \pr_1(t) =t, \quad  \pr_{n+1}(t) = t\pr_n(t)  - \pr_{n-1}(t)  \quad \hbox{\rm for all} \ \  n \geq 1. \end{equation} 
The first few polynomials in these series for $n \geq 2$ are  
$$\begin{tabular} {cccc}
$\Ur_2(t) =$ & $4t^2-1$ \hspace{2.2cm} & \hspace{1.2cm}$ \pr_2(t) =$ &  \hspace{-1.8cm}$t^2-1$  \\	
$\Ur_3(t) =$ &  $8t^3-4t$ \hspace{2cm} & \hspace{1.2cm} $\pr_3(t)  =$ & \hspace{-1.85cm} $t^3-2t$  \\	 
$\Ur_4(t) =$ & $16t^4-12t^2+1$\hspace{1.1cm} & \hspace{1.2cm}$ \pr_4(t) =$  & \hspace{-1.1cm} $t^4-3t^2+1$\\	
$\Ur_5(t) =$ & $32t^5-32t^3+6t$\hspace{1cm} & \hspace{1.2cm} $\pr_5(t) =$ &\hspace{-1.1cm} $t^5-4t^3+3t$ \\	
$\Ur_6(t) =$ & $64t^6-80t^4+24t^2-1$ & \hspace{1.2cm} $\pr_6(t)=$&$t^6 -5t^4 +6t^2 -1$.\end{tabular} $$ 

The Chebyshev polynomial $\Ur_n(t)$ has simple roots given by $\cos\big(\pi r/(n+1)\big)$
where $r = 1,\dots, n$.      Thus, the roots of  $\pr_n(t)$ are $2\cos\big(\pi r/(n+1)\big)$ for $r = 1,\dots, n$.  
This leads to the explicit expressions for the polynomials $\Ur_n(t)$ given in \cite{Ri} (see also \cite[Sec.~6.3]{D}),

\begin{align} \label{eq:U1} \Ur_n(t) &= \sum_{r = 0}^{\lfloor n/2\rfloor}  (-1)^r {n-r  \choose r}  (2t)^{n-2r} \ = \  2^n\prod_{r=1}^n  \left(t - \cos\left(\frac{\pi r}{n+1}\right)\right) \\
\pr_n(t) &= \sum_{r = 0}^{\lfloor n/2\rfloor}  (-1)^r {n-r  \choose r} \, t^{n-2r} 
\ = \  \prod_{r=1}^n  \left(t - 2\cos\left(\frac{\pi r}{n+1}\right)\right).  \label{eq:U2} \end{align} 

There are many identities relating the Chebyshev polynomials of the second kind to those of the first kind.  A particularly useful one for our purposes is 
\begin{equation} \label{eq:UT} \Ur_n(t) - \Ur_{n-2}(t) = 2 \mathrm{T}_n(t). \end{equation} 
 
\begin{subsection}{Cyclic groups} \label{S:cyc} \end{subsection}
Assume for $n \geq 3$  that 
\begin{equation}\label{eq:Cycgen} \mathrm{z}= \left (\begin{array}{cc} \zeta_n^{-1}  & 0 \\ 0 & \zeta_n \end{array}\right)\end{equation} where $\zeta_n = e^{2\pi i/n}$, a primitive $n$th root of unity in $\CC$, and let 
$\Cs_n$ be the cyclic subgroup of $\mathsf{SU}_2$ generated by  $\mathrm{z}$.    
The irreducible  modules for  $ \Cs_n$ are all one-dimensional and are given by  $\Cs_n^{(\ell)}= \CC\vs_\ell$ for $\ell = 0,1, \dots, n-1$,  where
$\mathrm{z} \vs_\ell = \zeta^\ell \vs_\ell$.  Thus,   $\Lambda(\Cs_n) = \{0,1,\dots,n-1\}$,  and $\Cs_n^{(j)} \cong \Cs_n^{(\ell)}$  whenever   $j \equiv \ell \modd n$. 
The  natural $\Cs_n$-module $\Vs$  
 of $2 \times 1$ column vectors, which $\Cs_n$ acts on by matrix
 multiplication, can be identified with the module  $\Cs_n^{(-1)}  \oplus \Cs_n^{(1)} = \Cs_n^{(n-1)}  \oplus \Cs_n^{(1)}$.   As $\GG$ is abelian, the conjugacy class representatives are simply all the elements $\mathrm{z}^r$, $r=0,1,\dots,n-1$,  of $\GG$.       The character value for $\mathrm{z}^r$ on $\Vs$ is  $\chi_{\Vs}(\mathrm{z}^r) = \zeta_n^{-r} + \zeta_n^r = 2 \cos(2\pi r/n)$.   Thus,  \eqref{eq:inv} becomes in this case
\begin{equation}\label{eq:invcyc}\mm^{0}(t) =  \frac{\det \left(\mathrm{I} - t  \angstrom \right)}{\det \left(\mathrm{I} - t \mathrm{A}\right)}
=  \frac{\det \left(\mathrm{I} - t  \angstrom \right)}{\prod_{r=0}^{n-1} \left(1- 2 \cos\left(\textstyle{\frac{2\pi r}{n}}\right)t\right)}.
\end{equation}

The matrix  $\angstrom$ in this equation is the adjacency matrix of the finite Dynkin diagram of type 
$\mathsf{A}_{n-1}$ obtained from the Dynkin diagram $\what{\mathsf{A}}_{n-1}$  in \eqref{eq:dd} by removing the affine node.  Thus,
$\angstrom$ is the tridiagonal matrix, 
$$\angstrom = \left ( \begin{matrix}  0 & 1  & 0 &  \cdots & 0 \\
1 & 0 & 1 & \dots  & 0 \\
\vdots &  \ddots & \ddots & \ddots & 0 \\
0 & \ldots & 1 & 0  & 1 \\
0 & 0 & \ldots &   1 & 0 \end{matrix} \right ).$$

Let $\arm_{n-1}(t)$
be the determinant $\det \left(\mathrm{I} - t  \angstrom \right)$ in the $\mathsf{A}_{n-1}$-case,
and set $\arm_0(t) = 1 = \arm_1(t)$.    It is easy to see using cofactor expansion on $\mathrm{I} - t  \angstrom$ that the following recursion relation holds,
\begin{equation} \arm_{n+1}(t) =  \arm_n(t) - t^2 \arm_{n-1}(t)  \qquad  \hbox{\rm for} \ \ n \geq 1. \end{equation} 
An easy inductive argument using the recursion relation for the polynomials $\pr_n(t)$ in \eqref{eq:pr}  shows that
\begin{equation}\label{eq:arm} \arm_n(t) = t^n \pr_n(t^{-1}) \qquad \hbox{\rm for all} \ \  n \geq 0. \end{equation} 

Since the roots of  $\pr_n(t)$ are $2\cos(\pi r/ (n+1))$ for $r = 1,\dots, n$,  (compare \eqref{eq:U2}),   we have 
\begin{align}\begin{split} \arm_n(t) & = t^{n}\pr_n(t^{-1}) = t^n \prod_{r=1}^n \left(t^{-1} - 2\cos\left(\frac{\pi r}{n+1}\right)\right) \\
& =  \prod_{r=1}^n \left(1 - 2\cos\left(\frac{\pi r}{n+1}\right)\, t\right).\end{split}\end{align}
Then \eqref{eq:U2} implies that  $\arm_n(t)$ has the closed-form expression, 

\begin{align}\label{eq:armexp} \arm_n(t)  = t^{n}\pr_n(t^{-1}) = t^n \sum_{r=0}^{\lfloor n/2\rfloor}  (-1)^r {n-r  \choose r}  (t^{-1})^{n-2r}  
= \sum_{r=0}^{\lfloor n/2\rfloor}  (-1)^r {n-r  \choose r}  t^{2r}. \end{align}
(Compare the expressions in  \cite[Sec.~6.3]{D}.)

The denominator in \eqref{eq:invcyc} is also related to Chebyshev polynomials, as cofactor expansion on $\mathrm{I} - t \mathrm{A}$ 
combined with \eqref{eq:arm} shows that
\begin{align}\begin{split}  
\det \left(\mathrm{I} - t \mathrm{A}\right) &= \arm_{n-1}(t) - 2 t^2 \arm_{n-2}(t) - 2t^n \\
&=t^{n-1} \pr_{n-1}(t^{-1}) - 2 t^n \pr_{n-2}(t^{-1}) - 2t^n \\
&=t^{n}\left( t^{-1} \pr_{n-1}(t^{-1}) - \pr_{n-2}(t^{-1})\right) - t^n \pr_{n-2}(t^{-1}) - 2t^n \\
&=t^n \left( \pr_n(t^{-1}) - \pr_{n-2}(t^{-1}) - 2 \right). \end{split} \end{align}
Then  \eqref{eq:UT} and \eqref{eq:T1} imply
$$\Ur_n(t) - \Ur_{n-2}(t) = 2 \mathrm{T}_n(t) = 2 \sum_{r=0}^{\lfloor n/2 \rfloor} {n \choose 2r} (t^2 -1)^r t^{n-2r},$$
so  that   
\begin{align}\begin{split}  
\det \left(\mathrm{I} - t \mathrm{A}\right) &= t^n \left( \pr_n(t^{-1}) - \pr_{n-2}(t^{-1}) - 2 \right) \\
&=  2 t^n \, \sum_{r=0}^{\lfloor n/2 \rfloor} {n \choose 2r} \left(\frac{t^{-2}}{4}- 1\right)^r  \left(\frac{t^{-1}}{2}\right)^{n-2r} \ - \ 2t^n \\
& = 2^{1-n}\sum_{r=0}^{\lfloor n/2 \rfloor} {n \choose 2r}(1-4t^2)^r   \ - \ 2t^n.   \end{split} \end{align} 

We summarize what we have shown for the cyclic case in the next theorem. 
\medskip

\begin{thm} \label{T:cyc}  Assume $\GG$ is the cyclic group $\Cs_n$ and let $\Vs = \CC^2 = \Cs_n^{(-1)} \oplus \Cs_n^{(1)}$. 
Then the Poincar\'e series $\mm^{0}(t)$ for the $\GG$-invariants $\Ts(\Vs)^\GG$  in $\Ts(\Vs) = \bigoplus_{k \geq 0} \Vs^{\ot k}$ is given by

\begin{align}\begin{split} \label{eq:cycinv2} \mm^{0}(t)  &=   \frac{{\displaystyle{\prod_{r=1}^{n-1}} \Big(1- 2 \cos(\textstyle{\frac{\pi r}{n}})t\Big)}}{\displaystyle{\prod_{r=0}^{n-1}} \Big(1- 2 \cos(\textstyle{\frac{2\pi r}{n}})t\Big)}  
= \frac{ \displaystyle{ \sum_{r=0}^{\lfloor (n-1)/2\rfloor}  (-1)^r {n-1-r  \choose r}  t^{2r} }}
{\displaystyle{2^{1-n}\sum_{r=0}^{\lfloor n/2 \rfloor} {n \choose 2r}(1-4t^2)^r   \ - \ 2t^n}}. \end{split}\end{align} 
\end{thm}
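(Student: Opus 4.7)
The plan is to assemble the two expressions for $\mm^0(t)$ by combining Theorem \ref{T:mainsu} with the Chebyshev-polynomial identifications of the numerator and denominator that were worked out in the paragraphs preceding the statement. The first expression is the most direct: Theorem \ref{T:mainsu} gives
\[
\mm^{0}(t) \;=\; \frac{\det(\mathrm I - t\angstrom)}{\det(\mathrm I - t\mathrm A)} \;=\; \frac{\det(\mathrm I - t\angstrom)}{\prod_{g\in\Gamma}\bigl(1-\chi_\Vs(g)t\bigr)}.
\]
Since $\Cs_n$ is abelian, $\Gamma=\{\mathrm z^r : 0\le r\le n-1\}$ and $\chi_\Vs(\mathrm z^r)=\zeta_n^{-r}+\zeta_n^{r}=2\cos(2\pi r/n)$, so the denominator immediately takes the product form $\prod_{r=0}^{n-1}\bigl(1-2\cos(2\pi r/n)\,t\bigr)$. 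For the numerator I would use the identification $\det(\mathrm I - t\angstrom)=\arm_{n-1}(t)=t^{n-1}\pr_{n-1}(t^{-1})$, combined with the fact that the roots of $\pr_{n-1}$ are $2\cos(\pi r/n)$ for $r=1,\dots,n-1$ (equation \eqref{eq:U2}), to obtain
\[
\det(\mathrm I - t\angstrom) \;=\; \prod_{r=1}^{n-1}\Bigl(1-2\cos\bigl(\tfrac{\pi r}{n}\bigr)t\Bigr).
\]
This establishes the first equality in \eqref{eq:cycinv2}.

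For the second equality I would convert numerator and denominator to their polynomial (non-factored) forms. The numerator is handled by \eqref{eq:armexp}, which gives $\arm_{n-1}(t)=\sum_{r=0}^{\lfloor (n-1)/2\rfloor}(-1)^r\binom{n-1-r}{r}t^{2r}$. For the denominator I would reuse the cofactor computation already carried out in the text, which shows $\det(\mathrm I - t\mathrm A)=t^n\bigl(\pr_n(t^{-1})-\pr_{n-2}(t^{-1})-2\bigr)$, then apply the Chebyshev identity \eqref{eq:UT} together with the closed form \eqref{eq:T1} for $\Tr_n(t)$ to rewrite $\pr_n(t^{-1})-\pr_{n-2}(t^{-1})=\Ur_n(t^{-1}/2)-\Ur_{n-2}(t^{-1}/2)=2\Tr_n(t^{-1}/2)$ as the stated binomial sum, yielding $\det(\mathrm I - t\mathrm A)=2^{1-n}\sum_{r=0}^{\lfloor n/2\rfloor}\binom{n}{2r}(1-4t^2)^r-2t^n$.

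Essentially every ingredient has already been established in the running text, so no step is a genuine obstacle; the proof is just a bookkeeping assembly. The only place I would be slightly careful is the boundary behavior of the cofactor recursion for small $n$ (the cases $n=3,4$) and making sure the initial values $\arm_0(t)=\arm_1(t)=1$ are consistent with $\pr_0(t)=1,\pr_1(t)=t$ under the substitution $\arm_n(t)=t^n\pr_n(t^{-1})$, which is immediate by induction. With these checks in place, substituting the numerator and denominator forms into $\mm^0(t)=\det(\mathrm I - t\angstrom)/\det(\mathrm I - t\mathrm A)$ yields both displayed equalities in \eqref{eq:cycinv2}.
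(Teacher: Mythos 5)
Your proposal is correct and follows essentially the same route as the paper: it invokes Theorem \ref{T:mainsu} with the character values $\chi_\Vs(\mathrm z^r)=2\cos(2\pi r/n)$ for the denominator, identifies $\det(\mathrm I - t\angstrom)$ with $\arm_{n-1}(t)=t^{n-1}\pr_{n-1}(t^{-1})$ via the tridiagonal cofactor recursion and the roots of $\pr_{n-1}$, and converts $\det(\mathrm I - t\mathrm A)=t^n\bigl(\pr_n(t^{-1})-\pr_{n-2}(t^{-1})-2\bigr)$ to the binomial form using \eqref{eq:UT} and \eqref{eq:T1}, exactly as in the text preceding the theorem. No gaps.
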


\begin{remark}  When $\GG = \Cs_n$ with $n = 2(\ell+1)$ and  $\ell \geq 1$,  the cosine expression for
$\mm^0(t)$  in \eqref{eq:cycinv2}  gives the same result as
Theorem \ref{T:expo}.    Indeed,  $n-1 = 2\ell+1$, \   $\cos\left(\pi m/(\ell+1)\right) = \cos\left(2\pi m/n\right)$,  and the product 
of the factors \  $\left(1-2\cos\left(\pi m\ (\ell+1) \right) t\right)$ \  as $m$ ranges over the elements of \  $\what \Xi \, = \, \{0, 1, 1, \dots,\, \ell, \break  \ell,\, \ell+1\}$ is the
same as the product of the terms  $\left(1-2\cos\left(2\pi r/ n\right)t\right)$ for $r=0,1,\dots, n-1$,  since
$\cos\left(2\pi (n-r)/n\right) = \cos\left(2\pi r/n\right)$ for all $r= 0,1,\dots, \ell$.  
\end{remark} 

In Table \ref{tabl:cyc},  the numerator and denominator polynomials in \eqref{eq:cycinv2} and the Poincar\'e series 
$\mm^0(t)$ are
displayed  for $\Cs_n$,  $n = 3,4,5,6,7$.  
\begin{table}[h]
$$\begin{tabular} {|c|| c|c|c|} 
\hline
 & $\det(\mathrm{I}-t \angstrom)$ & $\det(\mathrm{I}-t \mathrm{A})$    & $\begin{matrix} \mm^{0}(t) =  \\
\det \left(\mathrm{I} - t  \angstrom \right)/\det \left(\mathrm{I} - t \mathrm{A}\right) \end{matrix}$  \\ \hline
\hline
$(\Cs_3, \what{\mathsf{A}}_2)$ & $1-t^2$ & $1-3t^2-2t^3$ & $1+2t^2 + 2t^3 + 6t^4$     \\ 
  & & & $+ 10t^5 + 22t^6 + \, \cdots \,$ \\ \hline
$(\Cs_4, \what{\mathsf{A}}_3)$  & $1-2t^2$ & $1-4t^2$ &$1 + 2t^2 + 8t^4$ \\
 & & & $32t^6 + 128 t^7 \, + \, \cdots$     \\ \hline
$(\Cs_5, \what{\mathsf{A}}_4)$  & $1-3t^2+t^4$ & $1-5t^2+5t^4-2t^5$ & $1+2t^2+6t^4 + 2t^5$\\
& & & $+20t^6 + 14t^7 \, + \, \cdots$    \\  \hline
$(\Cs_6, \what{\mathsf{A}}_5)$  & $1-4t^2+3t^4$ & $1-6t^2+9t^4-4t^6$ & $1+2t^2 + 6t^4$ \\
& & & $22t^6 +86 t^8\, + \, \cdots$     \\ \hline
$(\Cs_7, \what{\mathsf{A}}_6)$  & $1-5t^2$ & $1-7t^2+14t^4$ & $1+2t^2+6t^4$    \\
& $+6t^4-t^6$ & $-7t^6-2t^7$ & $+20t^6 +2t^7 + 70 t^8 \,+\, \cdots$  \\
\hline
\end{tabular}$$   \caption{Poincar\'e series $\mm^{0}(t)$ for the cyclic groups $\Cs_n$, \  $3\leq n \leq 7$ \label{tabl:cyc} }
 \end{table}  
   
 The Bratteli diagram for $\Cs_n$ and $\Vs = \CC^2$  is Pascal's triangle on a cylinder of ``diameter'' $\tilde n$,
 where $\tilde n = n$ if $n$ is odd and $\tilde n = \half n$ if $n$ is even.   Pictured in  Figure \ref{fig:Cyc5}  is the Bratteli diagram for $\Cs_5$.    The subscripts on the white (trivial) node  correspond to the Poincar\'e series $1+2t^2+6t^4 + 2t^5+20t^6 + 14t^7+\cdots$ in the third line of  Table \ref{tabl:cyc}. 
 \begin{figure}[ht!] 
$$
\begin{array}{c} \includegraphics[scale=.7,page=6]{poincare-diagrams.pdf} \end{array}
$$ 
\caption{Levels $k=0,1,\dots,6$ of the Bratteli diagram  $\mathcal{B}_\Vs(\Cs_5)$ for $\Vs = \CC^2$}\label{fig:Cyc5}
\end{figure}  

\begin{remark}\label{R:JBC} It was shown in \cite[Sec.~1.6]{BBH}  by using  the basic construction of Jones \cite{J1} that  for all finite subgroups $\GG$ of $\SU_2$, the edges in $\mathcal{B}_\Vs(\GG)$ between level $k$ and level $k+1$ 
 that are NOT obtained from edges between level $k-1$ and level $k$ by reflection over level $k$, exactly form the 
 representation graph (affine Dynkin diagram),  except in the $\Cs_n$ case for $n$ odd where it is the double of the Dynkin diagram.   This is indicated by the shaded edges in Figures \ref{fig:Cyc5}, \ref{fig:D6}, 5, 6, and 7 below and has  important implications for the structure of the centralizer algebras $\Zs_k(\GG)$. \end{remark}     
 
\begin{subsection}{Binary dihedral groups} \end{subsection}
Assume for $n \geq 2$,  that 
\begin{equation}\label{eq:Cycgen} \mathrm{x} = \left (\begin{array}{cc} \zeta_{2n}^{-1}  & 0 \\ 0 & \zeta_{2n} \end{array}\right ) 
\quad \hbox{\rm and} \quad \mathrm{y} = \left (\begin{array}{cc} 0  & i \\ i & 0\end{array}\right ), \end{equation}
where $ i = \sqrt{-1}$ and $\zeta_{2n} = e^{2\pi i/ 2n} = e^{\pi i/ n}$, a primitive $2n$th root of unity.  
The elements $\mathrm{x}$ and $\mathrm{y}$ generate a binary dihedral subgroup  $\DD_n$  of order $4n$ in $\SU_2$.   
There are four irreducible $\DD_n$-modules of dimension $1$, and $n-1$ irreducible modules of dimension $2$.   
The defining module $\Vs = \CC^2$ is irreducible as a $\DD_n$-module.   

We take as the conjugacy class representatives of $\DD_n$ the elements 
in  $\Gamma = \{\pm \mathrm{I},  \ \mathrm{x}^r, (r = 1,\dots, n-1),  \mathrm{y}, \ 
\mathrm{yx}\}$ .    Then computing their traces gives

\begin{align}\label{eq:Ddet} \det \left(\mathrm{I} - t \mathrm{A}\right)  & =  \prod_{g \in \Gamma} \left(1- \chi_\Vs(g)t \right ) =
(1-2t)(1+2t)\prod_{r=1}^{n-1} \left(1- 2\cos\left(\frac{\pi r}{n}\right) t \right)\nonumber  \\    &=  (1-4t^2) \mathrm{a}_{n-1}(t) = 
(1-4t^2)  \sum_{r=0}^{\lfloor (n-1)/2\rfloor}  (-1)^r {n-1-r  \choose r}  t^{2r}, \end{align} 
where the expression for $\arm_{n-1}(t)$ follows from \eqref{eq:armexp}.   

The matrix  $\angstrom$  is the adjacency matrix of the finite Dynkin diagram of type 
$\mathsf{D}_{n+2}$ obtained from the Dynkin diagram $\what{\mathsf{D}}_{n+2}$  in \eqref{eq:dd}.   
Let $\dr_{n}(t)$
be the determinant $\det \left(\mathrm{I} - t  \angstrom \right)$ in the $\mathsf{D}_{n+2}$-case,
and set $\dr_0(t) = 1,  \ \dr_1(t) = 1- 2t^2$.    It is easy to see using cofactor expansion on the
matrix $\mathrm{I} - t  \angstrom$  that the following recursion relation holds,
$$\dr_{n+1}(t) =  \dr_n(t) - t^2 \dr_{n-1}(t)  \qquad  \hbox{\rm for} \ \ n \geq 1.$$

The polynomials 
$2 t^{n+1} \Tr_{n+1}(t^{-1}/2)$, where $\Tr_{n+1}(t)$ is the Chebyshev polynomial of the first kind,  satisfy the same initial conditions; namely,  
$$2 t^{n+1} \Tr_{n+1}\left(\frac{t^{-1}}{2}\right) = \begin{cases} 1 & \qquad \hbox{\rm for} \ \   n = 0 \\  1-2t^2   & \qquad \hbox{\rm for} \ \   n =1,
\end{cases} $$
 and the same recursion relation as the    
polynomials $\dr_n(t)$.   As a consequence,  we can conclude
$$\dr_n(t)  = 2 t^{n+1} \Tr_{n+1}\left(\frac{t^{-1}}{2}\right).$$
Combining that relation with the identities in \eqref{eq:T1} and \eqref{eq:T2} gives 

\begin{align} \dr_n(t) &= 2t^{n+1}\left(\frac{t^{-1}}{2}\right)^{n+1} \sum_{r=0}^{\lfloor {n+1}/2\rfloor} {n+1  \choose {2r}} \left (1-\left(\frac{t^{-1}}{2}\right)^{-2}\right)^{r} \nonumber  \\
& = 2^{-n}\sum_{r=0}^{\lfloor (n+1)/2\rfloor} {n+1 \choose {2r}}(1-4t^2)^r \label{eq:d1} \\ &=  \prod_{r=1}^{n+1} \left(1 - 2\cos\left ( \textstyle{\frac{(2r-1)\pi}{2(n+1)}}\right)t \right). \label{eq:d2}  \end{align}
(Compare with  \cite[Sec.~6.5]{D}, which computes the characteristic
polynomial of $\angstrom$ for the $\mathsf{D}$-case.)   The expressions in \eqref{eq:d1} and \eqref{eq:d2}  together  with \eqref{eq:Ddet} imply  the next result.  
\bigskip

\begin{thm} \label{T:dih}  Assume $\GG$ is the binary dihedral group $\DD_n$,  and let $\Vs = \CC^2$. 
Then the Poincar\'e series $\mm^{0}(t)$ for the $\GG$-invariants in $\Ts(\Vs) = \bigoplus_{k \geq 0} \Vs^{\ot k}$ is given by

\begin{align} \label{eq:dihedinv} \mm^{0}(t)  &=  \frac{ 2 t^{n+1} \Tr_{n+1}\left(\frac{t^{-1}}{2}\right)}{(1-4t^2)\displaystyle{\prod_{r=1}^{n-1}} \Big(1- 2 \cos(\textstyle{\frac{\pi r}{n}})t\Big)} 
=  \frac{\displaystyle{\prod_{r=1}^{n+1}} \left(1-2\cos\left(\textstyle{\frac{(2r-1)\pi}{2(n+1)}}\right)t \right)}  {(1-4t^2)\displaystyle{\prod_{r=1}^{n-1}} \Big(1- 2 \cos(\textstyle{\frac{\pi r}{n}})t\Big)}  \\
&=   \frac{\displaystyle{2^{-n}\sum_{r=0}^{\lfloor (n+1)/2\rfloor} {n+1 \choose {2r}}(1-4t^2)^r}}{(1-4t^2)\displaystyle{ \sum_{r=0}^{\lfloor (n-1)/2\rfloor}  (-1)^r {n-1-r  \choose r}  t^{2r} }}. \nonumber
 \end{align} 
\end{thm}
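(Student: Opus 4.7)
The plan is to apply Theorem~\ref{T:mainsu} to $\GG = \DD_n$ and $\Vs = \CC^2$, so that $\mm^0(t)$ equals $\det(\mathrm{I}-t\angstrom)/\det(\mathrm{I}-t\mathrm{A})$, and then to express each factor in the three desired closed forms. For the denominator, I would compute the character values of $\Vs$ at each conjugacy class representative in $\Gamma = \{\pm\mathrm{I},\,\mathrm{x}^r\ (1\le r\le n-1),\,\mathrm{y},\,\mathrm{yx}\}$, obtaining $\chi_\Vs(\pm\mathrm{I}) = \pm 2$, $\chi_\Vs(\mathrm{x}^r) = 2\cos(\pi r/n)$, and $\chi_\Vs(\mathrm{y}) = \chi_\Vs(\mathrm{yx}) = 0$ by inspecting the traces of the generators. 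Substituting into the product formula of Theorem~\ref{T:mainsu} yields exactly $(1-4t^2)\arm_{n-1}(t)$ as in \eqref{eq:Ddet}, and the binomial-sum form of $\arm_{n-1}(t)$ is already available via \eqref{eq:armexp}.

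For the numerator $\dr_n(t) := \det(\mathrm{I}-t\angstrom)$ on the $\mathsf{D}_{n+2}$ diagram, I would label the nodes so that there is a distinguished "tail" leaf opposite the fork, and perform cofactor expansion on the row and column of that leaf. The first expansion peels off the leaf and contributes $\dr_n$, while the second expansion on the neighboring degree-two node contributes $-t^2\dr_{n-1}$; this gives the recurrence $\dr_{n+1}(t) = \dr_n(t) - t^2\dr_{n-1}(t)$. A direct determinant calculation on the small diagrams establishes the initial values $\dr_0(t) = 1$ and $\dr_1(t) = 1-2t^2$.

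I would then identify $\dr_n(t) = 2t^{n+1}\Tr_{n+1}(t^{-1}/2)$ by induction. Multiplying the Chebyshev recurrence $\Tr_{n+1}(s) = 2s\Tr_n(s) - \Tr_{n-1}(s)$ at $s = t^{-1}/2$ through by $2t^{n+1}$ transforms it into exactly the recurrence satisfied by $\dr_n$, and the base cases check directly: $2t\Tr_1(t^{-1}/2) = 1 = \dr_0(t)$ and $2t^2\Tr_2(t^{-1}/2) = 1-2t^2 = \dr_1(t)$. With this identification in hand, the three claimed forms of $\mm^0(t)$ in \eqref{eq:dihedinv} are immediate consequences of the two closed-form expressions for $\Tr_{n+1}$ in \eqref{eq:T1} and \eqref{eq:T2}: the first gives the binomial sum $2^{-n}\sum_{r}\binom{n+1}{2r}(1-4t^2)^r$, while the second gives the product $\prod_{r=1}^{n+1}\bigl(1-2\cos((2r-1)\pi/(2(n+1)))\,t\bigr)$.

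The only delicate point I anticipate is bookkeeping the shift between the subscript $n$ on $\dr_n$ and the Dynkin-diagram subscript $n+2$ on $\mathsf{D}_{n+2}$, and correspondingly the shift $n \leftrightarrow n+1$ when passing to $\Tr_{n+1}$. Making the cofactor expansion on the correct leaf (the tail rather than one of the fork leaves) so that the reduction lands on $\mathsf{D}_{n+1}$ and $\mathsf{D}_n$, and verifying $\dr_0$ and $\dr_1$ against the correct small diagrams, is where I would be most careful; once the indexing is pinned down, the rest of the argument is a straightforward combination of Theorem~\ref{T:mainsu}, the Chebyshev identities, and the formula for $\arm_{n-1}(t)$.
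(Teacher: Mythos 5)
Your proposal matches the paper's own proof essentially step for step: the denominator is computed from the character values over the same set $\Gamma$ of conjugacy class representatives to give $(1-4t^2)\arm_{n-1}(t)$, the numerator $\dr_n(t)$ is handled via the cofactor recursion $\dr_{n+1}=\dr_n-t^2\dr_{n-1}$ with the same initial values, and the identification $\dr_n(t)=2t^{n+1}\Tr_{n+1}(t^{-1}/2)$ plus the closed forms \eqref{eq:T1} and \eqref{eq:T2} yield the three stated expressions. The argument is correct as written.
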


Table \ref{tab:dih} below displays the polynomials $\det(\mathrm{I}-t \angstrom)$ and  $\det(\mathrm{I}-t \mathrm{A})$ and Poincar\'e series 
$\mm^{0}(t)=  \det \left(\mathrm{I} - t  \angstrom\right) / \det \left(\mathrm{I} - t \mathrm{A}\right)$  for $\DD_n$,  $n = 2,3,4,5,6$.  
The Bratteli diagram for the binary dihedral group $\DD_6$ and $\Vs = \CC^2$  is  pictured in  Figure \ref{fig:D6}.   
The shaded edges give the 
 representation graph (affine Dynkin diagram $\what{\mathsf{D}}_8$). 
 The subscripts on the white node, which corresponds to the trivial $\DD_6$-module,  are the coefficients of the
Poincar\'e series $\mm^{0}(t)$ in the last line of Table \ref{tab:dih}.  
\bigskip  \medskip
 
 \begin{table}[h] \begin{center}{
\begin{tabular} {|c|| c|c|c|} 
\hline
 & $\det(\mathrm{I}-t \angstrom)$ & $\det(\mathrm{I}-t \mathrm{A})$    & $\mm^{0}(t)$    \\ \hline
\hline
$(\DD_2, \what{\mathsf{D}}_4)$ & $1-3t^2$ & $1-4t^2$ & $1+t^2 + 4t^4 + 16t^6$     \\ 
  & & & $+ 64t^8 + 256t^{10} + \, \cdots \,$ \\ \hline
$(\DD_3, \what{\mathsf{D}}_5)$  & $1-4t^2+2t^4$ & $1-5t^2+4t^4$ & $1+t^2 + 3t^4 + 11t^6$  \\
 & & & $43t^8 + 171 t^{10} \, + \, \cdots$     \\ \hline
$(\DD_4, \what{\mathsf{D}}_6)$  & $1-5t^2+5t^4$ & $1-6t^2+8t^4$ & $1+t^2+3t^4+10t^6$ \\
& & & $+36t^8 + 136t^{10} \, + \, \cdots$    \\  \hline
$(\DD_5, \what{\mathsf{D}}_7)$  & $1-6t^2$\qquad & $1-7t^2$\quad & $1+t^2+3t^4+10t^6$ \\
&$+9t^4-2t^6$ & $+13t^4-4t^6$  & $35t^8 +118 t^{10}\, + \, \cdots$     \\  
 \hline
$(\DD_6, \what{\mathsf{D}}_8)$  & $1-7t^2$\qquad & $1-8t^2$\quad & $1+t^2+3t^4+10t^6$    \\
&$+14t^4-7t^6$&$+19t^4-12t^6$& $+35 t^8 + 126t^{10} \,+\, \cdots$  \\
\hline
\end{tabular} }\end{center} 
 \caption{Poincar\'e series $\mm^{0}(t)$ for the 
binary  dihedral groups $\DD_n$, \  $2\leq n \leq 6$ \label{tab:dih} }
 \end{table}  
 
 \begin{figure}[ht!] 
$$
\begin{array}{c} \includegraphics[scale=.75,page=7]{poincare-diagrams.pdf} \end{array}
$$ \vspace{-.8cm} 
 \caption{Levels $k=0,1,\dots, 8$ of the Bratteli diagram  $\mathcal{B}_\Vs(\DD_6)$ for $\Vs = \CC^2$}\label{fig:D6}
\end{figure}

\newpage \hspace{.1cm}

\begin{subsection}{Exceptional binary polyhedral groups} \end{subsection}
In this final section, we present analogous results for the subgroups $\TT, \OO,$ and $\II$ of $\SU_2$.  
The denominator $\det(\mathrm{I}-t \mathrm{A})$ in the Poincar\'e series  can be computed by applying Theorem \ref{T:expo} with the exponents
in Table \ref{tab:expo-Cox}.  Alternatively, one can use 
the fact that $\det(\mathrm{I}-t \mathrm{A}) = \prod_{g \in \Gamma} (1-\chi_{\Vs}(g) t)$ and read off the character
values,  for example,  from  \cite[Tables A.9, A.12, and A.19]{Stk}.   The determinants  $\det(\mathrm{I}-t \mathrm{A})$
and $\det(\mathrm{I}-t \angstrom)$  also can be computed by
hand or by using a convenient software package.   In applying Theorem \ref{T:expo} to evaluate  
$\det(\mathrm{I}-t \angstrom)$, it is helpful to use the fact that the exponents of the finite
Dynkin diagram occur in pairs $m, m'$ such that $m+m' = h$ and $\cos\left(\pi m/h\right) = -\cos\left(\pi m'/h\right)$
to get the results below.    In Table \ref{tabu:Ex},  $\phi = \half(1 + \sqrt{5})$ (the golden ratio),  and $\phi^* = \half(1 - \sqrt{5})$. 
The subscripts on the white (trivial) nodes in the Bratteli diagrams $\mathcal B_{\Vs}(\GG)$ below  for $\GG = \TT,\OO,\II$  correspond to the
coefficients of  Poincar\'e series $\mm^0(t)$ in Table \ref{tabu:Ex}. \newpage

\vspace{-1cm}
\begin{table}[h]
$$\begin{tabular} {|c||c|} 
\hline \hline  
 & $(\TT, \what{\mathsf{E}}_6)$    \\  \hline \hline 
$\det(\mathrm{I}-t \angstrom)$  &  $\prod_{m=1,4,5}  \left(1-4\cos^2\left(\frac{m\pi}{12}\right)t^2\right) = 1-5t^2 +5t^4 -t^6$   \\ \hline
$\det(\mathrm{I}-t \mathrm{A})$ & $(1-2t)(1+2t) (1-t)^2(1+t)^2 = 1-6t^2+9t^4-4t^6$  \\ \hline 
$\mm^0(t)$ & $1+t^2+2t^4+6t^6+22t^8 + 86t^{10} \,+\, \cdots$  \\ 
\hline \hline
& $(\OO, \what{\mathsf{E}}_7)$   \\ \hline \hline
$\det(\mathrm{I}-t \angstrom)$  & $\prod_{m=1,5,7} \left(1-4\cos^2\left(\frac{m\pi}{18}\right)t^2\right) = 1-6t^2 +9t^4 -3t^6$ \\
\hline
$\det(\mathrm{I}-t \mathrm{A})$  &$(1-2t)(1+2t) (1-t)(1+t)(1+\sqrt{2}t)(1-\sqrt{2}t)$
\\ & \qquad \qquad  $=  1-7t^2+14t^4-8t^6$  \\   \hline
$\mm^0(t)$ & $1+t^2+2t^4+5t^6+15t^8 + 51t^{10} \,+\, \cdots$ \\
\hline \hline
& $(\II, \what{\mathsf{E}}_8)$  \\  \hline  \hline
$\det(\mathrm{I}-t \angstrom)$  & $\prod_{m=1,7,11,13} \left(1-4\cos^2\left(\frac{m\pi}{30}\right)t^2\right) = 1-7t^2 +14t^4 -8t^6 +t^8$ \\
\hline
$\det(\mathrm{I}-t \mathrm{A})$ &$(1-2t)(1+2t)(1-t)(1+t)$\qquad \qquad \qquad \qquad \hfil\\
& \qquad \qquad\qquad $\times (1-\phi t)(1-\phi^* t)(1+\phi t)(1+ \phi^* t)$  \\
& $\ \qquad \qquad \quad  = (1-4t^2)(1-t^2)(1-t-t^2)(1+t-t^2)$ \\ 
& $= 1-8t^2 +20t^4-17t^6 + 4t^8 $ \\ \hline 
$\mm^0(t)$ & $ 1+t^2 +2t^4 +5t^6 +14t^8+42t^{10} + 133 t^{12} \,+\, \cdots$ \\  \hline  
\end{tabular} $$ \vspace{-1cm}
\begin{center}{ \caption{Poincar\'e series $\mm^0(t) = \det(\mathrm{I}-t\angstrom) / \det(\mathrm{I}-t \mathrm{A})$ for the exceptional polyhedral groups }\label{tabu:Ex}}\end{center} 
\end{table}   
\vspace{-1cm}

\begin{center}{\includegraphics[scale=.7,page=8]{poincare-diagrams.pdf}}\end{center}
\vspace{-.8cm}
\begin{center}{Figure 5: Levels $k=0,1,\dots,8$ of the Bratteli diagram  $\mathcal{B}_\Vs(\TT)$  for $\Vs = \CC^2$}\label{fig:TT} \end{center}

\begin{center}{ \includegraphics[scale=.78,page=9]{poincare-diagrams.pdf}}\end{center}
\vspace{-.7cm}
 \begin{center}{Figure 6: Levels $k=0,1,\dots,8$ of the Bratteli diagram  $\mathcal{B}_\Vs(\OO)$ for $\Vs = \CC^2$}\label{fig:OO}\end{center}  

\begin{center}{ \includegraphics[scale=.75,page=10]{poincare-diagrams.pdf}} \end{center}   
\vspace{-.5cm}
 \begin{center}{Figure 7: Levels $k=0,1,\dots,8$ of the Bratteli diagram  $\mathcal{B}_\Vs(\II)$ for $\Vs = \CC^2$} \label{fig:II} \end{center} 

Recall from \eqref{eq:main} that the Poincar\'e series $ \mm^\mu(t)$  for the multiplicity of the irreducible $\GG$-module $\GG^\mu$
 in the tensor algebra $\Ts(\Vs)$  is given by  $ \mm^\mu(t) =$  \break  $\det(\mathrm{M}^\mu)/\det(\mathrm{I}-t\mathrm{A}) =
 \det(\mathrm{M}^\mu)/\prod_{g \in \Gamma} \left(1- \chi_\Vs(g)t\right)$, where $\mathrm{M}^\mu$ is the matrix obtained
 from $\mathrm{I}-t\mathrm{A}$ by replacing the column indexed by $\mu$ by the column $\underline{\delta} = \left (\begin{smallmatrix}  1\\ 0 \\ \vdots \\  \\ 0 \end{smallmatrix}\right )$ in Theorem \ref{T:main}.
 The series $\mm^\mu(t)$ can also be computed using Remark \ref{R:mmu} since $\mm^0(t)$ is known
from  Table \ref{tabu:Ex}.   In the diagrams below,  we attach to each
	 node $\mu$ of the affine Dynkin diagrams $\what {\mathsf{E}}_6, \what {\mathsf{E}}_7, \what {\mathsf{E}}_8$,   the  polynomial $\det(\mathrm{M}^\mu)$.  
 The polynomial for $\mu=0$ is the same as $\det(\mathrm{I}-t\angstrom)$ in the previous table.   
\bigskip
 
 \begin{center}{\hspace{-.25cm} \includegraphics[scale=.95, page=11]{poincare-diagrams.pdf}
\hspace{-.75cm} \includegraphics[scale=.95,page=12]{poincare-diagrams.pdf} }\end{center} 
\vspace{-.45cm} 
\begin{center}{Figure 8: \  $\det(\mathrm{M}^\mu)$ for the binary tetrahedral and octahedral groups $\TT$ and $\OO$ \label{fig:TO}}
 \end{center}
 \vspace{-1cm} 
\begin{center}{\includegraphics[scale=1,page=13]{poincare-diagrams.pdf}}\end{center}

\vspace{-.8cm}
\begin{center}{Figure 9: \  $\det(\mathrm{M}^\mu)$ for the binary icosahedral group $\II$ \label{fig:II}}  
\end{center} 

\begin{remark} An inductive argument on the edge connections in the Bratteli diagram can be used to 
determine formulas for the irreducible $\mathsf{Z}_k(\GG)$-modules (i.e. for
the multiplicities $\mm_k^\lambda$)  for the exceptional polyhedral groups.  The results of
applying this method were given in \cite{BBH}. 
 For example,  when $\lam = 0$, then $\mm_k^0 = 0$ unless $k$ is even, and  for $k = 2n\geq 2$, we have from \cite[Sec.~4.3]{BBH} that  
\begin{equation*}\mm_k^0 = \begin{cases} \frac{1}{12}(4^n+8) & \qquad \hbox{\rm if} \ \ \GG = \TT, \\
\frac{1}{24}(4^{n} +6\cdot 2^n+8)  & \qquad \hbox{\rm if} \ \ \GG = \OO,\\
\frac{1}{60}(4^n + 12 \mathsf{L}_{2n} + 20) & \qquad \hbox{\rm if} \ \ \GG = \II, 
\end{cases}\end{equation*} 
where $L_{2n}$ is the  (2n)th Lucas number.   The Lucas numbers  $L_r$, $r \geq 0$,  
satisfy the Fibonacci recursion $L_{r+1} = L_{r} + L_{r-1}$  but
start from the initial conditions  $L_0 = 2, \ L_1 = 1$.   
Similar expressions for $\mm_k^\lambda$ for all $\lambda \in \Lambda(\II)$ also involve Lucas numbers.   The expressions for $\mm_k^\lambda$ in \cite[Sec.~4.3]{BBH}
can be put into a generating series,  which can also be used to determine  the Poincar\'e series  $\mm_k^\lam(t)$ for the exceptional groups. \end{remark}

\medskip 

\noindent \textit{\small Department of Mathematics, University of Wisconsin-Madison, Madison, WI 53706, USA}\\
{\small benkart@math.wisc.edu}

 \end{document}